\newcommand{\FF}{F}
\newcommand{\GG}{G}
\newcommand{\Split}{S}
\newcommand{\Perm}{\mathfrak{S}}
\newcommand{\BB}{B}
\newcommand{\UU}{U}
\newcommand{\UK}{U^K}
\newcommand{\US}{U^S}
\newcommand{\Uamb}{U^\textup{amb}}
\newcommand{\CS}{\mathfrak{c}S}
\newcommand{\CUK}{\mathfrak{c}\UK}
\newcommand{\CUS}{\mathfrak{c}\US}
\newcommand{\CUamb}{\mathfrak{c}\Uamb}
\newcommand{\CB}{\mathfrak{c}\BB}
\newcommand{\BC}{\textup{BC}}
\newcommand{\BCiso}{\textup{BC}^*}
\theoremstyle{plain}
\newtheorem{theorem}{Theorem}[section]
\newtheorem{lemma}[theorem]{Lemma}
\newtheorem{corollary}[theorem]{Corollary}
\newtheorem{proposition}[theorem]{Proposition}
\theoremstyle{definition}
\newtheorem{definition}[theorem]{Definition}
\title{Split graphs: combinatorial species and asymptotics}
\author{\vspace{0pc} Justin M. Troyka\thanks{Work done during graduate fellowship, Department of Mathematics, Dartmouth College, Hanover, NH, United States.} \vspace{.25pc} \\
\normalsize \vspace{0pc} Department of Mathematics and Statistics \vspace{-.25pc} \\
\normalsize \vspace{0pc} York University \vspace{-.25pc} \\ 
\normalsize Toronto, ON, Canada \\
\normalsize \texttt{jmtroyka@yorku.ca}}
\date{2019 May 12\vspace{-1pc}}
\begin{document}

\maketitle

\begin{abstract}
A split graph is a graph whose vertices can be partitioned into a clique and a stable set. We investigate the combinatorial species of split graphs, providing species-theoretic generalizations of enumerative results due to B\'ina and P\v{r}ibil (2015), Cheng, Collins, and Trenk (2016), and Collins and Trenk (2018). In both the labeled and unlabeled cases, we give asymptotic results on the number of split graphs, of unbalanced split graphs, and of bicolored graphs, including proving the conjecture of Cheng, Collins, and Trenk (2016) that almost all split graphs are balanced.\end{abstract}

A \emph{split graph} is a graph whose vertices can be partitioned into two sets, $K$ and $S$, such that the vertices in $K$ form a clique (complete subgraph) and the vertices in $S$ form a stable set (independent set). A partition of a graph in this way is called a \emph{$KS$-partition}.

Split graphs are a well-known class of perfect graphs, and they are precisely the graphs $G$ such that both $G$ and $\overline{G}$ are triangulated. A summary of split graphs from that perspective is found in Golumbic \cite[Sec.\ 6]{Golumbic}. Moreover, Hammer and Simeone \cite{HS} characterize split graphs in terms of their degree sequences and thereby provide an efficient algorithm for determining whether a graph is a split graph.

For a given class of graphs, two enumerative problems are to count the unlabeled graphs (the isomorphism classes of graphs) on $n$ vertices and to count the labeled graphs on $n$ vertices (in which the $n$ vertices have distinct labels $1$ through $n$). In recent years, there has been interest in counting both unlabeled and labeled split graphs. Royle \cite{Royle} gives a bijection between unlabeled split graphs and minimal set covers, of which the latter had previously been counted. Cheng, Collins, and Trenk \cite{CCT} explore connections between split graphs and Nordhaus--Gaddum graphs, and Collins and Trenk \cite{CollinsTrenk} give bijections between unlabeled split graphs, $XY$-graphs (called bicolored graphs in this paper), and bipartite posets. Collins and Trenk also characterize the minimal set covers, $XY$-graphs and bipartite posets that correspond to unbalanced split graphs (defined in Section \ref{sec:SplitGraphs}). As for counting labeled split graphs, the exact enumeration has been done by B\'ina and P\v{r}ibil \cite{BP}; but our results yield a formula (Corollary \ref{cor:Bina}) somewhat simpler than theirs. The asymptotic enumeration of labeled split graphs was done much earlier, by Bender, Richmond, and Wormald \cite{BRW}, who show the asymptotic number is the same as for a few other classes of labeled graphs, including what our paper calls bicolored graphs.

In this paper, we extend some of these enumerative results to the setting of combinatorial species. The theory of combinatorial species, introduced by Joyal \cite{Joyal}, is a powerful conceptual framework for thinking about combinatorial structures that can be labeled or unlabeled, such as graphs. Our results on the level of species are valuable not only because we can instantly recover from them the known labeled and unlabeled enumerations, but because they say something more than enumeration alone: for two types of structures to have the same (or isomorphic) species means that they are combinatorially equivalent in some sense. For a comprehensive treatment of species theory, see \cite{BLL}; for a summary of the theory and some applications to graph enumeration, see \cite{comps}.

Section \ref{sec:species} is an exposition of the parts of species theory we will use. In Section \ref{sec:SplitGraphs} we provide more background on split graphs and prove several identities about the species of split graphs and related species. This culminates in Theorem \ref{thm:main}, a species version of the result from \cite{CCT, CollinsTrenk} that the number of unbalanced split graphs on $n$ vertices equals the number of split graphs on $\le n-1$ vertices. In Section \ref{sec:BicoloredAsymptotics} we relate the species of split graphs to the species of bicolored graphs, and we prove several asymptotic results, including Theorem \ref{thm:almost-all-balanced-unlabeled}, that almost all split graphs are balanced, which was a conjecture of Cheng, Collins, and Trenk \cite{CCT}.

\section{Combinatorial species} \label{sec:species}

This section is an all-too-brief statement of some of the ideas originating from Joyal \cite{Joyal}; for many more details, see \cite{Joyal, BLL, comps}. We describe two equivalent ways of viewing a combinatorial species: one in terms of categories and functors, and one in terms of permutation group actions.

A \emph{(combinatorial) species} $\FF$ is a functor from the category of finite sets with bijections to itself. That is, $\FF$ is a rule that does the following:
\begin{itemize}
\item To each finite set $I$, assigns a finite set of \emph{structures}, denoted $\FF[I]$; \item To each bijection $\varphi$ between finite sets $I$ and $J$, assigns a bijection $\FF[\varphi]$ between the sets $\FF[I]$ and $\FF[J]$.
\end{itemize}
The set $I$ is thought of as a set of labels, in which case $\FF[I]$ is the set of $\FF$-structures in which each label in $I$ occurs exactly once, and the bijection $\FF[\varphi]$ maps each $\FF$-structure on label set $I$ to an $\FF$-structure on label set $J$ obtained by replacing label $i$ with label $\varphi(i)$ for all $i \in I$.

A basic example of a species is the species of sets, denoted $E$ (for \textit{ensemble}, the French word for ``set''). This is defined by $E[I] = \{I\}$: there is one structure with label set $I$, namely the set $I$ itself. For a bijection $\varphi\colon I \to J$, we must have $E[\varphi]$ map the one element of $\{I\}$ to the one element of $\{J\}$.

For the species in this paper, the structures are graphs with a certain property. In this case, $\FF[I]$ is the set of graphs with that property whose vertices are labeled with the elements of $I$, and $\FF[\varphi] \colon \FF[I] \to \FF[J]$ maps each graph in $\FF[I]$ to the corresponding isomorphic graph in $\FF[J]$.

In the special case of $I = J = [n]$, we write $\FF[n]$ instead of $\FF[{[n]}]$, and $\FF[n]$ is considered as the set of labeled $\FF$-structures of size $n$. A bijection $[n] \to [n]$ is a permutation in the symmetric group $\Perm_n$. Given $\sigma \in \Perm_n$, the function $\FF[\sigma]$ is a permutation of the set $\FF[n]$. Thus $F$ induces an action of $\Perm_n$ on $\FF[n]$ for each $n$.

If $\FF$ is a species of graphs with a certain property, then $\Perm_n$ acts on $\FF[n]$ by graph isomorphisms. Given a graph $g \in \FF[n]$, each permutation induces an isomorphism from $g$ to some graph in $\FF[n]$, and the permutations that map $g$ to itself are the automorphisms of $g$.

In the example of $E$ (the species of sets), since there is only one $E$-structure on label set $[n]$, the symmetric group must act trivially on it. Thus the automorphism group of an $E$-structure (a set) of size $n$ is all of $\Perm_n$: permuting the elements of a set does not change what the set is.

\subsection{Labeled and unlabeled structures, generating functions, and species isomorphism}

Let $\FF$ be a species. The elements of $\FF[n]$ are the (labeled) $\FF$-structures of size $n$. The orbits of $\FF[n]$ under the action of $\Perm_n$ are isomorphism classes of $\FF$-structures, and we write $\FF[n]/\Perm_n$ to denote the set of these orbits. We think of each orbit as an \emph{unlabeled} $\FF$-structure. For example, in the case where $\FF$ is a species of graphs, the orbits are the isomorphism classes of graphs, which are unlabeled graphs.

A species $\FF$ has three kinds of generating functions associated with it:
\begin{itemize}
\item $\FF(x)$ denotes the exponential generating function for the labeled $\FF$-structures:
\[ \FF(x) = \sum_{n\ge0} |\FF[n]|\,\frac{x^n}{n!}. \]
\item $\widetilde{\FF}(x)$ denotes the ordinary generating function for the unlabeled $\FF$-structures:
\[ \widetilde{\FF}(x) = \sum_{n\ge0} |\FF[n]/\Perm_n|\,x^n. \]
\item $Z_\FF(p_1,p_2,\ldots)$ denotes the cycle index series of $\FF$, a generating function in infinitely many variables. We will not use the cycle index series in this paper, but it is important because it generalizes $\FF(x)$ and $\widetilde{\FF}(x)$, in the sense that $Z_\FF(x, 0, 0, \ldots) = \FF(x)$ and $Z_\FF(x, x^2, x^3, \ldots) = \widetilde{\FF}(x)$.
\end{itemize}
Note that for two species $\FF$ and $\GG$ we can have $\FF(x) = \GG(x)$ without having $\widetilde{\FF}(x) = \widetilde{\GG}(x)$, or the other way around.

For the set species $E$, since $|E[n]| = 1$ for all $n$, we have $E(x) = \sum_{n\ge0} \frac{x^n}{n!} = e^x$ and $\widetilde{E}(x) = \sum_{n\ge0} x^n = \frac{1}{1-x}$.

A \emph{species isomorphism} $\alpha$ from $\FF$ to $\GG$ is a natural equivalence from $\FF$ to $\GG$ as functors. That is, $\alpha$ is a family of bijections $\alpha_I \colon \FF[I] \to \GG[I]$ for each label set $I$ that commutes with bijections between label sets: for any bijection $\varphi\colon I \to J$, we have $\alpha_J \circ \FF[\varphi] = \GG[\varphi] \circ \alpha_I$. Viewing species in terms of group actions, this is equivalent to a bijection $\alpha_n \colon \FF[n] \to \GG[n]$ for each $n$ that preserves the action of $\Perm_n$, meaning that $\alpha_n(\sigma \cdot f) = \sigma \cdot \alpha_n(f)$ for all $\sigma \in \Perm_n$ and $f \in \FF[n]$. Thus, $\FF$ and $\GG$ are isomorphic if as functors they are naturally equivalent, or if $\FF[n]$ and $\GG[n]$ are isomorphic $\Perm_n$-sets for every $n$; in this case we simply write $\FF = \GG$.

The species $E$ is isomorphic to the species of complete graphs (cliques), and it is also isomorphic to the species of edgeless graphs (stable sets). It is useful to think of $E$ in this way when we think about building certain species of graphs from other species of graphs.

If $\FF$ and $\GG$ are isomorphic, then $Z_\FF = Z_\GG$; and, as already discussed, if $Z_\FF = Z_\GG$, then $\FF(x) = \GG(x)$ and $\widetilde{\FF}(x) = \widetilde{\GG}(x)$. The converse does not hold for any of these implications. Thus, species isomorphism is the finest notion of equality between combinatorial structures that we have discussed here. We will view isomorphic species as being equal, simply writing $F = G$ if $F$ and $G$ are isomorphic.

\subsection{Addition and multiplication of species}

Given species $\FF$ and $\GG$, the sum $\FF + \GG$ is a species whose structures are $\FF$-structures or $\GG$-structures --- that is, $(\FF+\GG)[I] = \FF[I] \sqcup \GG[I]$ (the disjoint union). From the perspective of group actions, $(\FF+\GG)[n]$ is the disjoint union of $\FF[n]$ and $\GG[n]$ as $\Perm_n$-sets.

If $\FF$ and $\GG$ are species of two different types of graphs, then $\FF+\GG$ is the species of graphs of one type or the other type, provided that no graph is of both types. If the two types of graphs do overlap, then $\FF+\GG$ double-counts the intersection.

The product $\FF \cdot \GG$ is defined as the species whose structures are ordered pairs of an $\FF$-structure and a $\GG$-structure. That is, the structures in $(\FF \cdot \GG)[I]$ are obtained by partitioning the labels as $I = U \cup V$ and forming ordered pairs $(\mathrm{f}, \mathrm{g})$ with $\mathrm{f} \in \FF[U]$ and $\mathrm{g} \in \GG[V]$. The size of such an ordered pair as an $(\FF \cdot \GG)$-structure is the sum of the sizes of its two components.

The sum and product of species correspond to the sum and product of their generating functions: that is, $(\FF+\GG)(x) = \FF(x) + \GG(x)$ and $(\widetilde{\FF+\GG})(x) = \widetilde{\FF}(x) + \widetilde{\GG}(x)$ and $Z_{\FF+\GG} = Z_\FF + Z_\GG$, and likewise for multiplication. Furthermore, sum and product respect species isomorphism, in the sense that the isomorphism class of $\FF+\GG$ or $\FF \cdot \GG$ is determined by the isomorphism classes of $\FF$ and $\GG$.

We can use these operations to build up complicated species from simpler ones. For instance, $E \cdot E$ is the species of partitions of a set into an ordered pair of two sets, which are equivalent to subsets of a set. The labeled generating function is $E(x)\,E(x) = e^x\,e^x = e^{2x}$, in which the coefficient of $x^n/n!$ is $2^n$; hence a set of size $n$ has $2^n$ subsets. The unlabeled generating function is $\widetilde{E}(x)\,\widetilde{E}(x) = \frac{1}{(1-x)^2}$, in which the coefficient of $x^n$ is $n+1$; hence, if the elements of a set of size $n$ are unlabeled, then the set has $n+1$ distinguishable subsets (one of each size).

\subsection{Other species notation}

The zero species, denoted $0$, is defined as the species with no structures: $0[I] = \varnothing$ for every set $I$. The zero species is the additive identity: $\FF + 0 = \FF$ for all $\FF$. It also satisfies $0 \cdot \FF = 0$ for all $\FF$. The one species, denoted $1$, is defined as the species with one structure of size $0$ and no other structures: $1[\varnothing] = \{\varepsilon\}$ (a null structure) and $1[I] = \varnothing$ for every non-empty $I$. The one species is the multiplicative identity: $1 \cdot \FF = \FF$ for any species $\FF$.

Given a species $\FF$, the species $\FF_k$ is the species of $\FF$-structures of size $k$; that is, $\FF_k[I] = \varnothing$ if $|I| \not=k$, and $\FF_k[I] = \FF[I]$ if $|I| = k$. This means that $\textstyle\FF = \sum_{n\ge0} \FF_n$. We also write $\FF_{\le k}$ to denote the species of $\FF$-structures of size $\le k$, so $\FF_{\le k} = \FF_0 + \cdots + \FF_k$.

\subsection{The ring of virtual species}

We have defined addition and multiplication of species. By writing formal differences of species, such as $\FF - \GG$, we can extend the set of species to a ring, called \emph{the ring of virtual species}, in which the species $0$ and $1$ are the additive and multiplicative identities. The elements of this ring are called \emph{virtual species}. Every virtual species $\FF$ has the form $\FF^+ - \FF^-$, where $\FF^+$ and $\FF^-$ are ordinary species. If $\FF^+ - \FF^-$ and $\GG^+ - \GG^-$ are two virtual species in this form, then we identify them as the same virtual species if the two ordinary species $\FF^+ + \GG^-$ and $\GG^+ + \FF^-$ are isomorphic, i.e.\ $\FF^+ + \GG^- = \GG^+ + \FF^-$. The fact that virtual species form a ring makes algebraic manipulation much easier with virtual species than with combinatorial species alone, as we will see in our computations with the species of split graphs.

A virtual species $\FF$ is a unit if and only if $\FF_0 = 1$ or $\FF_0 = -1$, i.e.\ the ``constant term'' of $\FF$ is equivalent as a virtual species to the species $1$ or its negative $-1$. When this is the case, we will write $1/\FF$ to denote the multiplicative inverse of $\FF$, and we will write fractions accordingly.

\section{Split graphs} \label{sec:SplitGraphs}

We begin with a characterization of the sizes of $K$ and $S$ in a $KS$-partition of a split graph.

\begin{proposition}[{\cite[Thm.\ 6.2]{Golumbic}}] \label{prop:intro}
Let $G$ be a split graph and fix a $KS$-partition of $G$. Exactly one of the following holds:
\begin{itemize}
\item[(i)] $|K| = \omega(G)$ and $|S| = \alpha(G)$ and $G$ has a unique $KS$-partition;
\item[(ii)] $|K| = \omega(G) - 1$ and $|S| = \alpha(G)$ and there is $x \in S$ such that $K \cup \{x\}$ is a clique;
\item[(iii)] $|K| = \omega(G)$ and $|S| = \alpha(G) - 1$ and there is $x \in K$ such that $S \cup \{x\}$ is a stable set.
\end{itemize}
\end{proposition}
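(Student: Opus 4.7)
The plan is to exploit the single key fact that any clique meets any stable set in at most one vertex. Applying this to the given $KS$-partition $(K, S)$ and a maximum clique $C$ of $G$, we get $|K| \geq |C \cap K| = |C| - |C \cap S| \geq \omega(G) - 1$, and of course $|K| \leq \omega(G)$. Running the symmetric argument on a maximum stable set gives $|S| \in \{\alpha(G),\, \alpha(G) - 1\}$. So a priori four size pairings are possible, and the goal is to pin down exactly three of them and identify the extra-vertex witnesses in two.

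Next I would unpack what $|K| = \omega(G) - 1$ means: every maximum clique $C$ must satisfy $|C \cap K| = |K|$ and $|C \cap S| = 1$, forcing $K \subseteq C$ and $C = K \cup \{x\}$ for some $x \in S$, which is exactly the witness in case (ii). The symmetric argument produces the vertex $y \in K$ in case (iii). To rule out $|K| = \omega(G) - 1$ \emph{and} $|S| = \alpha(G) - 1$ occurring simultaneously, observe that such a pair of witnesses $x \in S$ and $y \in K$ would both lie in the clique $K \cup \{x\}$, forcing $xy$ to be an edge, and both lie in the stable set $S \cup \{y\}$, forcing $xy$ to be a non-edge --- a contradiction.

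For the uniqueness assertion in case (i), I would take an arbitrary second $KS$-partition $(K', S')$. Since $K, K'$ are cliques and $S, S'$ are stable sets, $|K \cap S'|, |K' \cap S| \leq 1$, so $K$ and $K'$ differ by at most one vertex on each side, and since $|K| = |K'| = \omega(G)$, by exactly the same number on each side. If they differ at all, say $K \setminus K' = \{u\}$ and $K' \setminus K = \{v\}$, then $v$ cannot be adjacent to $u$ (otherwise $K \cup \{v\}$ would be a clique of size $\omega(G) + 1$). But the fact that $S' = (S \setminus \{v\}) \cup \{u\}$ is stable forces $u$ to be non-adjacent to every vertex of $S \setminus \{v\}$, and combined with $u \not\sim v$ this makes $S \cup \{u\}$ a stable set of size $\alpha(G) + 1$, again a contradiction.

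The technical core is this final swap argument: being careful that a one-vertex discrepancy between two type-(i) $KS$-partitions forces either an oversized clique or an oversized stable set. Everything else follows directly from the clique-meets-stable-set-in-at-most-one-vertex principle together with very light case analysis on the possible values of $|K|$ and $|S|$.
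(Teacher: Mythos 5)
Your proof is correct and complete; note that the paper itself does not prove this proposition but simply cites it as Theorem 6.2 of Golumbic, so there is no in-paper argument to compare against. Your argument is the standard elementary one: the clique-meets-stable-set-in-at-most-one-vertex principle pins $|K|$ to $\{\omega(G)-1,\omega(G)\}$ and $|S|$ to $\{\alpha(G)-1,\alpha(G)\}$, the witness vertices fall out of the equality analysis, the doubly-deficient case is killed by the edge/non-edge contradiction on the pair $(x,y)$, and the swap argument gives uniqueness in case (i). The only step you leave implicit is why a second partition $(K',S')$ must also satisfy $|K'|=\omega(G)$ and $|S'|=\alpha(G)$; this is immediate from $|K'|\le\omega(G)$, $|S'|\le\alpha(G)$, and $|K'|+|S'|=|K|+|S|=\omega(G)+\alpha(G)$, but it is worth saying, since your swap argument quietly uses $|K|=|K'|$ to conclude the two difference sets are both singletons.
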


This proposition prompts the following definition, following \cite{CCT, CollinsTrenk}:

\begin{definition}
Let $G$ be a split graph. We say $G$ is \emph{balanced} if it has a unique $KS$-partition, and \emph{unbalanced} otherwise. A given $KS$-partition of $G$ is \emph{$S$-max} if $S$ is as large as possible, and \emph{$K$-max} if $K$ is as large as possible. Furthermore, a vertex $x$ as in case (ii) or (iii) of Proposition \ref{prop:intro} is called a \emph{swing vertex} of $G$.
\end{definition}

In this language, Proposition \ref{prop:intro} gives us these facts: every $KS$-partition of $G$ is $S$-max or $K$-max; if $G$ is unbalanced then the $K$ in a $K$-max partition has one vertex more than the $K$ in an $S$-max partition, and similarly for $S$; and a split graph is unbalanced if and only if it has a swing vertex.

In this section, we define four types of split graphs and define colored split graphs (Section \ref{sec:classifying}), we describe the species of split graphs and various related species (Section \ref{sec:species-split-related}), and we prove our main theorem and other identities on these species (Sections \ref{sec:identities} and \ref{sec:maintheorem}).

\subsection{Classifying split graphs} \label{sec:classifying}

A result due to Cheng, Collins, and Trenk \cite{CCT} describes the structure of the swing vertices of a split graph and lists all of the $KS$-partitions.

\begin{proposition}[{\cite[Thm.\ 10]{CCT}}] \label{prop:CCT}
Let $G$ be an unbalanced split graph, and let $A$ be the set of swing vertices of $G$. Then $A$ is either a clique or a stable set, and the non-swing vertices admit a partition into sets $Y$ and $Z$ such that every vertex in $A$ is adjacent to every vertex in $Y$ and no vertex in $Z$. Furthermore:
\begin{itemize}
\item If $A$ is a clique, then there is a unique $K$-max partition, namely $K = A \cup Y$ and $S = Z$; and the $S$-max partitions are given by $K = (A \smallsetminus \{a\}) \cup Y$ and $S = Z \cup \{a\}$ for $a \in A$.
\item If $A$ is a stable set, then there is a unique $S$-max partition, namely $K = Y$ and $S = A \cup Z$; and the $K$-max partitions are given by $K = Y \cup \{a\}$ and $S = (A \smallsetminus \{a\}) \cup Z$ for $a \in A$.
\end{itemize}
\end{proposition}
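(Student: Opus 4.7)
My plan is to exploit a clean containment lemma relating $S$-max and $K$-max $K$-parts, derive a uniqueness dichotomy, and then read off the structure of $A$ from whichever side turns out to be unique. Proposition \ref{prop:intro} tells us that an $S$-max partition has $|K| = \omega(G) - 1$ and a $K$-max partition has $|K| = \omega(G)$, so the two sides differ in size by exactly one; the whole argument turns on controlling how that extra vertex moves.

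First I would prove the key lemma: for any $S$-max partition $(K_1, S_1)$ and any $K$-max partition $(K_2, S_2)$, one has $K_1 \subsetneq K_2$ with $|K_2 \setminus K_1| = 1$. The argument is short: $K_2 \setminus K_1 \subseteq V \setminus K_1 = S_1$ lies in the stable set $S_1$ while also lying in the clique $K_2$, so it has at most one element, and the size equation forces equality and $K_1 \subseteq K_2$. From this follows a dichotomy: at least one of ``$S$-max partition'' or ``$K$-max partition'' is unique. Indeed, if two distinct $S$-max $K$-parts $K_1 \neq K_1'$ both sit inside some $K$-max $K_2$, then $K_1 \cup K_1'$ already has size $\omega(G)$ and must equal $K_2$, so $K_2$ is determined; symmetrically for the other side.

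By symmetry assume the $K$-max $(K_2, S_2)$ is unique, and let $W = \{a \in K_2 : a \text{ has no neighbor in } S_2\}$. The key lemma identifies the $S$-max partitions as $(K_2 \setminus \{a\}, S_2 \cup \{a\})$ for $a \in W$, so each $a \in W$ is a swing vertex and $W \subseteq A$. For the reverse inclusion I would pick $v \in A$ and note that $v$ is a type-(iii) swing of $(K_2, S_2)$ --- giving $v \in W$ --- or a type-(ii) swing of $(K_2 \setminus \{a^*\}, S_2 \cup \{a^*\})$ for some $a^* \in W$. In the latter case either $v = a^* \in W$, or $v \in S_2$ is adjacent to all of $K_2 \setminus \{a^*\}$; in that last sub-case, the swap $((K_2 \setminus \{a^*\}) \cup \{v\},\, (S_2 \cup \{a^*\}) \setminus \{v\})$ is a valid $K$-max partition distinct from $(K_2, S_2)$, contradicting uniqueness. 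Thus $A = W$ is a clique inside $K_2$. Finally, set $Y = K_2 \setminus W$ and $Z = S_2$: the adjacency conditions and the listed $S$-max partitions fall out of the definitions, and the enumerated unique $K$-max is $(A \cup Y, Z) = (K_2, S_2)$ itself.

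The main obstacle is the ``mixed swing'' sub-case: ruling out a swing vertex sitting on the $S_2$ side of the unique $K$-max. The uniqueness hypothesis is exactly the right lever --- a putative type-(ii) swing there manufactures a competing $K$-max partition --- but it requires verifying that this swap really does produce a clique and a stable set of the right sizes. Once that is in hand, the dual case (unique $S$-max, $A$ a stable set) is symmetric, and the degenerate case $|A| = 1$ (both sides unique) is subsumed.
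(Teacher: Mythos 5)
Your proof is correct and complete. Note that the paper itself gives no proof of this proposition --- it is imported verbatim from Cheng--Collins--Trenk \cite[Thm.\ 10]{CCT} --- so there is no in-paper argument to compare against; what you have supplied is a self-contained derivation from Proposition \ref{prop:intro} alone. Your containment lemma ($K_1 \subsetneq K_2$ with exactly one vertex of difference, for any $S$-max part inside any $K$-max part) is the right engine: it immediately yields that at least one of the two kinds of extremal partition is unique, identifies the non-unique side as the single-vertex swaps out of the unique side, and hence pins $A$ down as the set $W$ of vertices of $K_2$ with no neighbor in $S_2$ (a clique), with the complementation symmetry handling the dual case. The one step that genuinely needs care --- excluding a swing vertex $v \in S_2$ arising as a type-(ii) swing of some $S$-max partition $(K_2\smallsetminus\{a^*\},\,S_2\cup\{a^*\})$ --- you handle correctly: the competing partition $\bigl((K_2\smallsetminus\{a^*\})\cup\{v\},\,(S_2\smallsetminus\{v\})\cup\{a^*\}\bigr)$ is indeed a valid $K$-max partition (its $S$-part is stable because $a^*\in W$ has no neighbor in $S_2$) distinct from $(K_2,S_2)$, contradicting uniqueness. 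The degenerate case $|A|=1$ is, as you say, subsumed. No gaps.
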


In particular, every vertex that is not a swing vertex is either in $K$ for all $KS$-partitions or in $S$ for all $KS$-partitions. Proposition \ref{prop:CCT} allows us to classify split graphs according to whether their swing vertices form a clique or a stable set:

\begin{definition}
Let $G$ be a split graph.
\begin{itemize}
\item $G$ is \emph{$K$-canonical} if the set of swing vertices forms a clique of size $\ge 2$;
\item $G$ is \emph{$S$-canonical} if the set of swing vertices forms a stable set of size $\ge 2$;
\item $G$ is \emph{ambiguous} if there is exactly one swing vertex;
\item $G$ is \emph{balanced} if there are no swing vertices.
\end{itemize}
\end{definition}

By Proposition \ref{prop:CCT}, every split graph is exactly one of those four types. In \cite{CCT}, a split graph that is $K$-canonical or ambiguous is an \emph{NG-1 graph}, and a split graph that is $S$-canonical or ambiguous is an \emph{NG-2 graph}. Our choice of names is because we will use the following ``canonical'' $KS$-partition of a split graph $G$: if $G$ is $K$-canonical, the canonical partition is the unique $K$-max partition; if $G$ is $S$-canonical, the canonical partition is the unique $S$-max partition; if $G$ is ambiguous, there is no canonical partition; and if $G$ is balanced, the canonical partition is the unique $KS$-partition.

\begin{definition} \label{defn:colored}
A \emph{colored split graph} is a split graph with a chosen $S$-max partition. Equivalently, it is a split graph with vertices colored green (Kelly green) and red (Scarlet) such that the green set and the red set are respectively $K$ and $S$ in an $S$-max partition.
\end{definition}

The four types of split graphs extend to colored split graphs. If a split graph is $S$-canonical, ambiguous, or balanced, then it has a unique $S$-max partition, so there is only one way to color the vertices to obtain a colored split graph. However, if a split graph is $K$-canonical, then there is more than one $S$-max partition, and each one gives rise to a different colored split graph.

\subsection{The species of split graphs and related species} \label{sec:species-split-related}

Let $\Split$ be the species of split graphs. What this means is that, for $I$ a finite set, $\Split[I]$ is the set of split graphs on vertex set $I$, and for any bijection $\varphi$ between finite sets $I$ and $J$, $\Split[\varphi]$ is the bijection between $\Split[I]$ and $\Split[J]$ that maps each split graph on $I$ to the isomorphic copy obtained by replacing label $i$ with $\varphi(i)$.

Also let $\BB$ be the species of balanced split graphs, and let $\UU$ be the species of unbalanced split graphs; note that $\Split = \BB + \UU$. Recall that, for a species $\FF$, $\FF(x)$ is the exponential generating function that counts labeled $\FF$-structures, and $\widetilde{\FF}(x)$ is the ordinary generating function that counts unlabeled $\FF$-structures. In \cite{CCT, CollinsTrenk}, it is proved that the number of unlabeled unbalanced split graphs on $n$ vertices equals the number of unlabeled split graphs on $\le n-1$ vertices: in the language of generating functions,
\begin{equation} \label{eqn:identity} \widetilde{\UU}(x) = \frac{x}{1-x} \widetilde{\Split}(x). \end{equation}
The idea behind this is as follows: every \emph{unlabeled} unbalanced split graph has a unique $K$-max partition, and from this partition we can obtain a split graph by removing all the swing vertices from $K$. However, this does not work for \emph{labeled} graphs, because a labeled split graph can have more than one $K$-max partition. So we will need to be more careful in order to obtain an identity like (\ref{eqn:identity}) for the labeled generating functions or on the level of species.

Let $\UK$ be the species of $K$-canonical split graphs. Let $\US$ be the species of $S$-canonical split graphs. Let $\Uamb$ be the species of ambiguous split graphs. Then $\UU = \UK + \US + \Uamb$. Taking the graph complement gives a bijection between $K$-canonical split graphs and $S$-canonical split graphs, and this bijection commutes with graph isomorphisms, so it is a species isomorphism between $\UK$ and $\US$; thus, $\UK = \US$.

Let $\CS$ denote the species of colored split graphs. The isomorphisms between colored graphs are the graph isomorphisms that preserve color. It is a consequence of Proposition \ref{prop:CCT} that a split graph has a unique $S$-max partition up to relabeling of the vertices. This means that colored split graphs with the same underlying graph are isomorphic. Therefore:

\begin{proposition} \label{prop:CS-Split}
$\widetilde{\CS}(x) = \widetilde{\Split}(x)$; in words, the number of unlabeled colored split graphs on $n$ vertices equals the number of unlabeled split graphs on $n$ vertices. \hfill $\square$
\end{proposition}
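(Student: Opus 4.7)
The plan is to show that the forgetful map from colored split graphs to split graphs, obtained by discarding the coloring, induces a bijection on isomorphism classes. Since graph isomorphism ignores color, this map descends to a well-defined function $\bar{\phi}\colon \widetilde{\CS}[n] \to \widetilde{\Split}[n]/\Perm_n$. Surjectivity is immediate: by Proposition~\ref{prop:intro}, every split graph admits at least one $S$-max partition, which provides a coloring, so every isomorphism class of split graphs is in the image. The entire content of the proposition thus reduces to proving that $\bar{\phi}$ is injective.

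Injectivity amounts to the following assertion: for each split graph $G$, the set of $S$-max partitions of $G$ forms a single orbit under $\operatorname{Aut}(G)$. Indeed, two colored split graphs with the same underlying graph $G$ are related by a color-preserving isomorphism precisely when their $S$-max partitions are related by an element of $\operatorname{Aut}(G)$. I would then proceed by the case analysis from Section~\ref{sec:classifying}. If $G$ is balanced, $S$-canonical, or ambiguous, then by Proposition~\ref{prop:CCT} the $S$-max partition of $G$ is already unique, and there is nothing to verify. The only substantive case is when $G$ is $K$-canonical, in which case the $S$-max partitions are parameterized by the choice of swing vertex $a \in A$ that is moved from $K$ into $S$, where $A$ is the clique of swing vertices.

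The key observation, and the main (though modest) obstacle, is that the vertices of $A$ are indistinguishable in $G$: in the notation of Proposition~\ref{prop:CCT}, each vertex of $A$ is adjacent to all other vertices of $A$, to every vertex of $Y$, and to no vertex of $Z$, so all vertices in $A$ have identical neighborhoods. Consequently, every permutation of $A$, extended by the identity on $Y \cup Z$, is an automorphism of $G$, and the symmetric group on $A$ therefore acts transitively on the $S$-max partitions of $G$. Combining the cases, the fiber of $\bar{\phi}$ over each isomorphism class is a singleton, so $\bar{\phi}$ is a bijection and the two ordinary generating functions agree.
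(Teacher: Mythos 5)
Your proof is correct and follows the same route as the paper: the paper justifies the proposition by the remark that Proposition~\ref{prop:CCT} makes the $S$-max partition of a split graph unique up to relabeling, so colored split graphs with the same underlying graph are isomorphic. You have simply supplied the details of that remark --- in particular the observation that in the $K$-canonical case the swing vertices have identical neighborhoods, so $\operatorname{Aut}(G)$ acts transitively on the $S$-max partitions --- which is exactly the content the paper leaves implicit.
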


Let $\CUK$, $\CUS$, $\CUamb$, and $\CB$ denote the species of colored split graphs that are respectively $K$-canonical, $S$-canonical, ambiguous, and balanced. Then $\CS = \CUK + \CUS + \CUamb + \CB$. Since a split graph that is not $K$-canonical has only one colored split graph associated with it, we have the following equalities of species:

\begin{proposition} \label{prop:colored-equalities}
$\CUS = \US$ and $\CUamb = \Uamb$ and $\CB = \BB$, and $\CS - \CUK = \Split - \UK$.
\end{proposition}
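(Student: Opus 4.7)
The plan is to deduce the four equalities from the uniqueness (or non-uniqueness) of the $S$-max partition in each of the four types. Recall a colored split graph is a split graph together with a distinguished $S$-max partition. So there is a forgetful map from $\CS$ to $\Split$, and for each split graph $G$ the fiber has size equal to the number of $S$-max partitions of $G$. Proposition \ref{prop:CCT} tells us this number: when the swing vertex set $A$ is a stable set (case $\US$) there is a unique $S$-max partition $K=Y,\,S=A\cup Z$; when there is exactly one swing vertex (case $\Uamb$) again exactly one $S$-max partition exists, since any single swing vertex is trivially both a clique and a stable set of size $1$ (and the case of no swing vertex at all, the balanced case $\BB$, is handled by definition).

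Hence the forgetful map is a bijection on the subspecies $\CUS$, $\CUamb$, $\CB$, sending each colored structure to its underlying split graph. These bijections are defined purely in terms of the structure, so they are natural in the label set; that is, they commute with the action of relabeling bijections. Therefore each one is a species isomorphism, giving $\CUS = \US$, $\CUamb = \Uamb$, and $\CB = \BB$. (In the $K$-canonical case, by contrast, Proposition \ref{prop:CCT} says the fiber has size $|A|\ge 2$, so the forgetful map is not a bijection there; this is precisely why $\CUK$ must be treated separately.)

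For the final equality, we use the decompositions $\CS = \CUK + \CUS + \CUamb + \CB$ and $\Split = \UK + \US + \Uamb + \BB$, both of which are stated in Section~\ref{sec:classifying} (the four types of (colored) split graphs partition the set of all (colored) split graphs). Subtracting $\CUK$ from the first and $\UK$ from the second, and then applying the three species isomorphisms just established, we get
\[ \CS - \CUK \;=\; \CUS + \CUamb + \CB \;=\; \US + \Uamb + \BB \;=\; \Split - \UK, \]
as required. There is no real obstacle here beyond carefully invoking Proposition \ref{prop:CCT} to guarantee uniqueness of the $S$-max partition outside the $K$-canonical case; the subtraction step is formal and takes place in the ring of virtual species introduced earlier.
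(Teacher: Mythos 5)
Your proof is correct and follows essentially the same route as the paper: the paper likewise observes that a split graph that is not $K$-canonical has a unique $S$-max partition (hence a unique associated colored split graph), deduces the first three equalities from that, and obtains the fourth from the two four-term decompositions of $\Split$ and $\CS$. Your version just makes explicit the appeal to Proposition \ref{prop:CCT} and the naturality of the forgetful map, which the paper leaves implicit.
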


\begin{proof}
The first three equalities are immediate from the remark preceding the proposition. The last one follows from the first three because $\Split = \UK + \US + \Uamb + \BB$ and $\CS = \CUK + \CUS + \CUamb + \CB$.
\end{proof}

Thus we will make no further use of the symbols $\CUS$, $\CUamb$, and $\CB$.

We summarize the species we have defined in this section and the relations between them that we have seen so far:
\[
\begin{tabular}{ll|ll}
$\Split$ & split graphs & $\CS$ & colored split graphs \\
$\UU$ & unbalanced split graphs & & \\
$\UK$ & $K$-canonical split graphs & $\CUK$ & $K$-canonical colored split graphs \\
$\US$ & $S$-canonical split graphs & & \\
$\Uamb$ & ambiguous split graphs & & \\
$\BB$ & balanced split graphs & &
\end{tabular}
\]
\begin{align*}
\Split &= \UU + \BB \\
\UU &= \UK + \US + \Uamb \\
\US &= \UK \\
\CS &= \CUK + \US + \Uamb + \BB \\
\CS - \CUK &= \Split - \UK
\end{align*}

\subsection{Species identities involving colored split graphs} \label{sec:identities}

This section includes three very bijective proofs of species identities involving colored split graphs. The colored split graphs are needed so that we can get the results on split graphs that we really want, in the next section.

The first theorem is a partial analog of \eqref{eqn:identity} on the level of species:

\begin{theorem} \label{thm:uk}
$\UK = E_{\ge2} \cdot \CS$.
\end{theorem}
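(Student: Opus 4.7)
The plan is to construct an explicit natural bijection between $\UK[I]$ and $(E_{\ge 2} \cdot \CS)[I]$ for every finite set $I$. Given a $K$-canonical split graph $G$ on $I$, Proposition \ref{prop:CCT} provides the key decomposition: the swing set $A$ is a clique of size at least $2$, and there is a partition $I \smallsetminus A = Y \sqcup Z$ with every vertex of $A$ adjacent to every vertex of $Y$ and to no vertex of $Z$. The map will send $G$ to the ordered pair consisting of $A$ (as an $E_{\ge 2}$-structure) and the induced subgraph $G|_{I \smallsetminus A}$ equipped with the partition $(Y, Z)$ as a coloring.

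For the inverse, start with a partition $I = A \sqcup B$ with $|A| \ge 2$ together with a colored split graph on $B$ whose coloring arises from an $S$-max partition $(Y, Z)$. Reconstruct $G$ on $I$ by taking all edges of the given graph on $B$, making $A$ into a clique with every possible edge to $Y$, and no edges from $A$ to $Z$. It is immediate that $(A \cup Y, Z)$ is a $KS$-partition of $G$, so $G$ is a split graph, and the two operations are inverse to each other as soon as we know they produce objects of the claimed types.

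The main obstacle is therefore verifying the swing sets in both directions. In the forward direction, I need to show that $(Y,Z)$ is $S$-max in $G|_{I \smallsetminus A}$: if some $y \in Y$ had $Z \cup \{y\}$ stable, then $((A \cup Y) \smallsetminus \{y\},\, Z \cup \{y\})$ would be a $KS$-partition of $G$ exhibiting $y$ as a swing vertex via Proposition \ref{prop:intro}(iii), contradicting that the swing set of $G$ is exactly $A$. In the reverse direction, a short size computation using Proposition \ref{prop:intro} shows $(A \cup Y, Z)$ is a $K$-max partition of the reconstructed $G$, so all swing vertices lie in $K = A \cup Y$; each $a \in A$ is a swing vertex because $Z \cup \{a\}$ is stable by construction, and no $y \in Y$ can be a swing vertex because $(Y,Z)$ being $S$-max on $B$ prevents $Z \cup \{y\}$ from being stable. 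Since $A$ is a clique of size $\ge 2$, this shows $G$ is $K$-canonical with swing set exactly $A$. Naturality in $I$ is automatic because both maps are defined purely in terms of graph structure, so these maps assemble into the desired species isomorphism.
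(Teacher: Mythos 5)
Your proof is correct and takes essentially the same approach as the paper: both decompose a $K$-canonical split graph via its unique $K$-max partition into the swing clique $A$ and the colored remainder, verify that the inherited coloring of $G - A$ is $S$-max by showing otherwise some $y \in Y$ would be a swing vertex, and reconstruct by joining $A$ completely to the green part. Your verification that the reconstructed graph has swing set exactly $A$ is in fact slightly more detailed than the paper's (which asserts this briefly), though you should note that the step ``all swing vertices lie in $A \cup Y$'' rests on Proposition \ref{prop:CCT}: the swing set contains the clique $A$ of size $\ge 2$, hence is itself a clique and is contained in the unique $K$-max clique.
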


\begin{proof}
We find a bijection between the labeled structures that commutes with isomorphisms, i.e.\ that is invariant under permuting the labels. The left side, $\UK$, counts $K$-canonical split graphs. The right side, $E_{\ge2} \cdot \CS$, counts ordered pairs of a set of size $\ge 2$ and a colored split graph.

Let $G$ be a labeled $K$-canonical split graph, with its canonical $K$-max partition. Color the vertices in $K$ green and the vertices in $S$ red. Coloring the structures of $\UK$ in this way does not change $\UK$, because isomorphisms between $K$-canonical split graphs preserve the $K$-max partition.

We now define the bijection by mapping $G$ to $(A,G - A)$, where $A$ is the set of swing vertices of $G$ and $G - A$ is the colored graph obtained by removing $A$ from $G$. The swing vertices form a clique of size $\ge 2$, so $A$ is a structure in $E_{\ge2}$. The green set and the red set in $G-A$ (inheriting the colors from $G$) are respectively $K$ and $S$ in a $KS$-partition of $G-A$; so, to show that $G-A$ is a colored split graph, we show that this partition is $S$-max.

In the canonical $K$-max partition of $G$, all the swing vertices of $G$ are in $K$, so no vertices of $S$ are removed from $G$ to form $G-A$. Suppose $G-A$ is not $S$-max. Then, by Proposition \ref{prop:CCT}, $G-A$ has a vertex $v \in K$ adjacent to none of the vertices in $S$. But $G$ and $G-A$ have the same $S$, so $v$ can be moved to $S$ to form a new $KS$-partition of $G$, making $v$ a swing vertex of $G$. Thus $v \in A$, a contradiction. Therefore the green set and the red set form an $S$-max partition of $G-A$.

The mapping $G \mapsto (A, G-A)$ does not depend on how the labels of the vertices in $G$ are permuted, precisely because the $K$-max partition of $G$ is unique.

To show this is a bijection, we describe its inverse. Let $A$ be a set of size $\ge 2$ and let $H$ be a colored split graph with chosen partition $K \cup S$, with the elements of $A$ and the vertices of $H$ given distinct labels from a shared label set. We obtain a split graph $G$ from the ordered pair $(A, H)$ by adding the elements of $A$ into $K$ as follows: put an edge between every element of $A$ and every vertex in $K$, and also put an edge between every pair of elements of $A$. Every element of $A$ is now a swing vertex, so the swing vertices of $G$ form a clique of size $\ge 2$, and so $G$ is $K$-canonical.

The set $A$ in the mapping $(A, H) \mapsto G$ becomes the set of swing vertices of $G$; and conversely, in the canonical partition of a $K$-canonical graph, the swing vertices are all in $K$ and are adjacent to no vertices in $S$ (by Proposition \ref{prop:CCT}). Hence these two functions are inverses, proving that $\UK = E_{\ge2} \cdot \CS$.
\end{proof}

Now that we have done a detailed proof of species equality, the next ones will be somewhat abbreviated, as they use the same idea.

\begin{theorem} \label{thm:uamb}
$\Uamb = X \cdot \BB$.
\end{theorem}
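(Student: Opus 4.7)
The plan is to mimic the bijective argument of Theorem~\ref{thm:uk}, providing a natural bijection $\Uamb[I] \to (X \cdot \BB)[I]$ given by $G \mapsto (v,\, G - v)$, where $v$ is the unique swing vertex of the ambiguous split graph $G$. Since graph isomorphisms permute swing vertices, $v$ is canonically determined by $G$, so the map automatically commutes with relabeling of $I$.

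First I would verify the forward map is well-defined. Let $G$ be ambiguous with swing vertex set $A = \{v\}$, and write $Y$, $Z$ for the partition of the non-swing vertices provided by Proposition~\ref{prop:CCT}, so that $v$ is adjacent to every vertex of $Y$ and to no vertex of $Z$. In both cases of Proposition~\ref{prop:CCT}, the only two $KS$-partitions of $G$ are $(Y \cup \{v\},\, Z)$ and $(Y,\, Z \cup \{v\})$, so $G - v$ is a split graph with partition $K = Y$, $S = Z$. To see $G - v$ is balanced, suppose it had a swing vertex $w$. By Proposition~\ref{prop:intro} either $w \in Y$ is non-adjacent to every vertex of $Z$, or $w \in Z$ is adjacent to every vertex of $Y$. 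In the first case $((Y \setminus \{w\}) \cup \{v\},\, Z \cup \{w\})$ is a third $KS$-partition of $G$; in the second case so is $(Y \cup \{w\},\, (Z \setminus \{w\}) \cup \{v\})$. Either way this contradicts the fact (Proposition~\ref{prop:CCT}) that an ambiguous graph has exactly $|A| + 1 = 2$ such partitions.

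For the inverse map, given a pair $(v,\, H)$ with $H$ a balanced split graph on the remaining labels and unique $KS$-partition $(K,\, S)$, I construct $G$ by adding $v$ adjacent to every vertex of $K$ and to no vertex of $S$. Then both $(K \cup \{v\},\, S)$ and $(K,\, S \cup \{v\})$ are visibly $KS$-partitions of $G$, so $v$ is a swing vertex. Uniqueness of the swing vertex follows by running the forward argument in reverse: a second swing vertex $w$ of $G$ would, after deleting $v$, yield a second $KS$-partition of $H$, contradicting the balance of $H$. The two constructions are mutual inverses by design and both respect relabeling, so they assemble into a species isomorphism.

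The real obstacle is the case analysis in the middle paragraph: for each hypothetical extra swing vertex $w$ I must verify that the proposed new sets really do form a clique and a stable set in $G$, using both the known edges between $v$ and $Y \cup Z$ and the adjacency condition on $w$ inherited from its being a swing vertex of $G - v$. Once that is in hand, naturality is automatic because both the swing vertex of an ambiguous graph and the $KS$-partition of a balanced graph are canonical objects attached to the graphs themselves.
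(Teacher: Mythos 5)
Your proof is correct and takes essentially the same approach as the paper: the same bijection $G \mapsto (v, G-v)$ and its inverse, with the key step being that $G-v$ is balanced, proved by contradiction via Proposition~\ref{prop:CCT}. The only cosmetic difference is that you reach the contradiction by exhibiting a third $KS$-partition of $G$, whereas the paper observes that the hypothetical swing vertex of $G-v$ would be a second swing vertex of $G$; the two are interchangeable, and the clique/stable-set checks you flag as the remaining obstacle are immediate from the adjacency conditions you already stated.
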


\begin{proof}
The left side, $\Uamb$, counts ambiguous split graphs. The right side, $X \cdot \BB$, counts ordered pairs of a single element and a balanced split graph. We go from the left side to the right side as follows: given an ambiguous split graph $G$ with swing vertex $a$, map $G$ to $(a, G - a)$. It turns out that $G - a$ is balanced, which we prove below.

The inverse, going from the right side to the left side, is as follows: given a single element $a$ and a balanced split graph $H$, append $a$ as a new swing vertex in $H$, adding an edge between it and every vertex in $K$. This is well-defined precisely because a balanced split graph has a unique $KS$-partition.

We now need to show that, if $G$ is an ambiguous split graph with swing vertex $a$, then $G-a$ is balanced. By Proposition \ref{prop:CCT}, the vertices of $G - a$ can be partitioned into a clique $Y$ and a stable set $Z$ such that $a$ is adjacent to everything in $Y$ and nothing in $Z$. Suppose $G-a$ is unbalanced. Without loss of generality, the partition $Y \cup Z$ is a $K$-max partition of $G-a$. Then by Proposition \ref{prop:CCT} there is $y \in Y$ that is adjacent to nothing in $Z$. This makes $y$ a swing vertex of $G$ as well, contradicting that $a$ is the only swing vertex of $G$. Therefore, $G-a$ is a balanced split graph, as claimed.
\end{proof}

\begin{theorem} \label{thm:cuk}
$\CUK = X \cdot E_{\ge1} \cdot \CS$.
\end{theorem}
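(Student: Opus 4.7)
The plan is to refine the bijection from Theorem \ref{thm:uk} by tracking the extra information carried by the coloring. For a $K$-canonical split graph $G$ with swing vertex set $A$, Proposition \ref{prop:CCT} says the $S$-max partitions of $G$ are indexed by the elements of $A$: each $a \in A$ yields the $S$-max partition obtained by moving $a$ into $S$ (and this is all of them). So specifying a $K$-canonical colored split graph is the same as specifying a $K$-canonical split graph together with a distinguished ``moved'' swing vertex $a \in A$. Since Theorem \ref{thm:uk} already identifies a $K$-canonical split graph with a pair $(A, G - A) \in E_{\ge 2} \cdot \CS$, distinguishing one element of $A$ should split off an $X$ factor and reduce $E_{\ge 2}$ to $E_{\ge 1}$, giving $\CUK = X \cdot E_{\ge 1} \cdot \CS$.

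Concretely, I would define the forward map on $\CUK$ as follows: given a $K$-canonical colored split graph $G$, let $A$ be its set of swing vertices and let $a \in A$ be the unique swing vertex colored red. Send $G$ to the triple $(a,\, A \smallsetminus \{a\},\, G - A)$, where $G - A$ inherits its green/red coloring from $G$. Here $|A| \ge 2$ so $A \smallsetminus \{a\}$ is a set of size $\ge 1$, and by the same argument as in Theorem \ref{thm:uk} the inherited partition on $G - A$ is an $S$-max partition, so $G - A$ is a well-formed colored split graph.

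For the inverse, given $(a, B, H) \in X \cdot E_{\ge 1} \cdot \CS$, set $A := \{a\} \cup B$ and glue $A$ into $H$ as a clique whose every vertex is adjacent to every green vertex of $H$ and to no red vertex of $H$. Color $B$ green, $a$ red, and retain the colors on $H$. The verification splits into two pieces: (i) the resulting graph $G$ is $K$-canonical with swing-vertex set exactly $A$; (ii) the resulting coloring is an $S$-max partition of $G$. For (ii), once the swing set is known to be the clique $A$, Proposition \ref{prop:CCT} says the $S$-max partitions of $G$ are precisely the ones that move one element of $A$ from $K$ to $S$, which is exactly what we have done.

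The main obstacle is part (i): showing that the swing set of the reconstructed $G$ is exactly $A$ and nothing larger. Every $a' \in A$ is a swing vertex because it is nonadjacent to all of $S_H$, so $|A| \ge 2$ already forces $G$ to be $K$-canonical; but I must rule out extra swing vertices. No $v \in K_H$ can be a swing vertex: otherwise $v$ would have no edge to $S_H$, contradicting that the chosen partition of $H$ was already $S$-max (Proposition \ref{prop:CCT} applied to $H$). No $w \in S_H$ can be a swing vertex either, since Proposition \ref{prop:CCT} forces all swing vertices of $G$ to form a clique or stable set, and $w$ is nonadjacent to every element of $A$, so $w$ cannot join the clique $A$. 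This pins the swing set down to exactly $A$, which makes the two maps mutually inverse. Finally, as in Theorem \ref{thm:uk}, the construction is canonical — it refers only to swing vertices, coloring classes, and adjacency — so it commutes with relabelings and therefore defines a species isomorphism.
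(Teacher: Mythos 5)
Your proposal is correct and follows essentially the same route as the paper: the paper derives the identity by observing that a colored $K$-canonical split graph is a $K$-canonical split graph with a distinguished swing vertex moved into $S$, i.e.\ by replacing the factor $E_{\ge2}$ in Theorem \ref{thm:uk} with the pointed-set species $X \cdot E_{\ge1}$. You have simply written out explicitly the bijection and equivariance checks that the paper leaves implicit, and those checks are sound.
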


\begin{proof}
We can think of $X \cdot E_{\ge1}$ as the species of ``pointed sets'' of size $\ge 2$, a pointed set being a set with a chosen distinguished element. The left side, $\CUK$, counts colored $K$-canonical split graphs. The right side, $X \cdot E_{\ge1} \cdot \CS$, counts ordered pairs of a pointed set of size $\ge 2$ and a colored split graph.

A colored $K$-canonical split graph can be obtained by taking a $K$-canonical split graph and choosing one of its swing vertices to be in $S$. From Theorem \ref{thm:uk} we have
\[ \UK = E_{\ge2} \cdot \CS, \]
where $E_{\ge2}$ represents the set of swing vertices in a $K$-canonical split graph; choosing one of the swing vertices to be in $S$ can be accomplished by replacing $E_{\ge2}$ with the species of pointed sets $X \cdot E_{\ge1}$, which yields the desired result.
\end{proof}

We summarize the species equalities we obtained in this section:
\begin{align*}
\UK &= E_{\ge2} \cdot \CS \\
\Uamb &= X \cdot \BB \\
\CUK &= X \cdot E_{\ge1} \cdot \CS
\end{align*}

\subsection{Main theorem on the species of split graphs} \label{sec:maintheorem}

In this section, we will manipulate the species equalities found in the previous two sections, obtaining an equation relating the species of unbalanced split graphs and the species of split graphs. This is where the ring of virtual species finally pays off.

\begin{theorem} \label{thm:main}
$\UU = \frac{(2-X)\cdot E - 2}{(1-X) \cdot E} \cdot \Split$.
\end{theorem}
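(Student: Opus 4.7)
The plan is to work entirely in the ring of virtual species and combine the identities already established in Sections 3.2 and 3.3 to express $\UU$ as a rational multiple of $\Split$. Every denominator that will arise along the way has constant term $1$, so every division is legitimate.

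I would proceed in two stages. First, eliminate $\Uamb$ and $\BB$ to get a relation among $\UU$, $\UK$, and $\Split$. Using $\UU = 2\UK + \Uamb$ (from $\UU = \UK + \US + \Uamb$ together with $\US = \UK$), substituting $\Uamb = X \cdot \BB$ from Theorem \ref{thm:uamb}, and finally eliminating $\BB = \Split - \UU$, I obtain
\[ (1+X) \cdot \UU = 2 \cdot \UK + X \cdot \Split. \]
Since $1+X$ is a unit, everything reduces to expressing $\UK$ in terms of $\Split$.

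Second, I would eliminate $\CS$ and $\CUK$ by using the three relations $\UK = E_{\ge 2} \cdot \CS$ (Theorem \ref{thm:uk}), $\CUK = X \cdot E_{\ge 1} \cdot \CS$ (Theorem \ref{thm:cuk}), and $\CS - \CUK = \Split - \UK$ (Proposition \ref{prop:colored-equalities}). Substituting the first two into the third yields
\[ \CS \cdot \bigl(1 + E_{\ge 2} - X \cdot E_{\ge 1}\bigr) = \Split. \]
Writing $E_{\ge 1} = E - 1$ and $E_{\ge 2} = E - 1 - X$ collapses the parenthesised factor to the unit $(1-X) \cdot E$, so $\CS = \Split / ((1-X) \cdot E)$ and hence $\UK = \frac{E - 1 - X}{(1-X)\cdot E} \cdot \Split$.

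Finally I would plug this $\UK$ back into $(1+X) \cdot \UU = 2\UK + X \cdot \Split$ and simplify. Placed over the common denominator $(1-X) \cdot E$, the numerator $2(E-1-X) + X(1-X) \cdot E$ should factor as $(1+X) \cdot \bigl[(2-X) \cdot E - 2\bigr]$; after cancelling the common factor of $1+X$, the theorem follows. The only genuine obstacle is the algebraic bookkeeping: checking that $1+X$ and $(1-X) \cdot E$ are units (so the divisions make sense as virtual species), and spotting the factorisation of $1+X$ out of the final numerator, without which the cancellation that produces a clean formula would not occur.
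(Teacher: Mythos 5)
Your proposal is correct and follows essentially the same route as the paper: both derive $\UK = \frac{E-1-X}{(1-X)\cdot E}\cdot\Split$ by eliminating $\CS$ and $\CUK$ via Theorems \ref{thm:uk} and \ref{thm:cuk} and Proposition \ref{prop:colored-equalities}, and then substitute into $\UU = 2\,\UK + X\cdot(\Split-\UU)$. Your factorization $2(E-1-X)+X(1-X)\cdot E = (1+X)\cdot\bigl[(2-X)\cdot E - 2\bigr]$ checks out, so cancelling the unit $1+X$ gives exactly the claimed identity.
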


\begin{proof}
\begin{align*}
\CS &= \Split - \UK + \CUK & \text{(Prop.\ \ref{prop:colored-equalities})} \\
\CS &= \Split - \UK + X \cdot E_{\ge1} \cdot \CS & \text{(Thm.\ \ref{thm:cuk})},
\end{align*}
and we can solve for $\CS$ (in the ring of virtual species) to get
\begin{equation} \label{eqn:cs} \CS = \frac{\Split - \UK}{1 - X \cdot E_{\ge1}}. \end{equation}
The virtual species $1 - X \cdot E_{\ge1}$ has constant term $1$, so it is a unit and we can divide by it.

Now we can find $\UK$ in terms of $\Split$ alone:
\begin{align*}
\UK &= E_{\ge2} \cdot \CS & \text{(Thm.\ \ref{thm:uk})} \\
\UK &= E_{\ge2} \cdot \frac{\Split - \UK}{1 - X \cdot E_{\ge1}} & \text{(Eqn.\ (\ref{eqn:cs}))},
\end{align*}
and we can solve for $\UK$ to get
\[ \UK = \frac{E_{\ge2}}{1 - X \cdot E_{\ge1} + E_{\ge2}} \cdot \Split. \]
By substituting $E_{\ge1} = E - 1$ and $E_{\ge2} = E - 1 - X$, we get
\begin{equation} \label{eqn:uk} \UK = \frac{E - 1 - X}{(1-X)\cdot E} \cdot \Split. \end{equation}
The species $(1+X)\cdot E$ has constant term $1$, so it is a unit and we can divide by it.

Now we can find $\UU$ in terms of $\Split$ alone:
\begin{align*}
\UU &= \UK + \US + \Uamb \\
&= 2\,\UK + \Uamb \\
&= 2\,\UK + X \cdot \BB & \text{(Thm.\ \ref{thm:uamb})} \\
&= 2\,\UK + X \cdot (\Split - \UU) \\
\UU &= 2\,\frac{E - 1 - X}{(1-X)\cdot E} \cdot \Split + X \cdot (\Split - \UU) & \text{(Eqn.\ (\ref{eqn:uk}))},
\end{align*}
and solving for $\UU$ gives the desired theorem.
\end{proof}

Theorem \ref{thm:main} is the true generalization of \eqref{eqn:identity}. It expresses the species of unbalanced split graphs as the product of split graphs with a species involving sets. In fact, \eqref{eqn:identity} is recovered from Theorem \ref{thm:main} by passing to the unlabeled generating functions, using the fact that $\widetilde{E}(x) = \frac{1}{1-x}$. In the same way, we can pass to the labeled generating functions, using the fact that $E(x) = e^x$. This process yields a new result:

\begin{theorem} \label{thm:labeled-U-S}
$\UU(x) = \frac{(2-x)e^x - 2}{(1-x)e^x}\,\Split(x) = \frac{2 - x - 2e^{-x}}{1-x}\,\Split(x)$. \hfill $\square$
\end{theorem}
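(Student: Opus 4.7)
The plan is to deduce Theorem \ref{thm:labeled-U-S} directly from Theorem \ref{thm:main} by passing to exponential generating functions, exactly as the paragraph before the statement suggests. The key fact is that the map $\FF \mapsto \FF(x)$ extends from species to virtual species as a ring homomorphism into $\mathbb{Q}[[x]]$: sums go to sums, products go to products, and a virtual species with constant term $\pm 1$ has an EGF with constant term $\pm 1$, which is a unit in $\mathbb{Q}[[x]]$, so inverses are preserved as well. In particular the virtual species $(1-X)\cdot E$ appearing in the denominator of Theorem \ref{thm:main} has constant term $1$, so its EGF $(1-x)e^{x}$ is invertible in $\mathbb{Q}[[x]]$, and the quotient on the right-hand side of Theorem \ref{thm:main} makes sense after applying $(\cdot)(x)$.

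First I would record the EGFs of the building-block species: $E(x) = e^{x}$, $X(x) = x$, and $1(x) = 1$. Substituting these into the formula of Theorem \ref{thm:main} gives
\[ \UU(x) \;=\; \frac{(2-x)e^{x} - 2}{(1-x)e^{x}} \,\Split(x), \]
which is the first equality claimed. For the second equality, I would simply multiply numerator and denominator by $e^{-x}$ (legitimate in $\mathbb{Q}[[x]]$ since $e^{-x}$ is the inverse of $e^{x}$), obtaining
\[ \frac{(2-x)e^{x} - 2}{(1-x)e^{x}} \;=\; \frac{(2-x) - 2e^{-x}}{1-x} \;=\; \frac{2 - x - 2e^{-x}}{1-x}. \]

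There is no real obstacle here; the only thing worth being careful about is the justification that applying the EGF functor to a quotient of virtual species yields the quotient of the EGFs. If I wanted to be fully explicit, I would rewrite Theorem \ref{thm:main} as $(1-X)\cdot E \cdot \UU = \bigl((2-X)\cdot E - 2\bigr) \cdot \Split$, which is an honest equality of virtual species (no division), apply the EGF homomorphism to both sides to get $(1-x)e^{x}\,\UU(x) = \bigl((2-x)e^{x} - 2\bigr)\,\Split(x)$, and then divide by $(1-x)e^{x}$ in $\mathbb{Q}[[x]]$. This sidesteps any foundational concern and gives the statement immediately.
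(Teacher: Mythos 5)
Your proposal is correct and is exactly the paper's argument: the paper derives this theorem by passing from Theorem \ref{thm:main} to exponential generating functions via $E(x) = e^x$ and $X(x) = x$, just as you do. Your extra care in clearing denominators to justify applying the EGF homomorphism to a quotient of virtual species is a sound (if unstated in the paper) refinement of the same route.
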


\section{Bicolored graphs and asymptotics} \label{sec:BicoloredAsymptotics}

This section concerns asymptotic enumeration. Given non-negative sequences $x_n$ and $y_n$, we say $x_n$ asymptotically equals $y_n$ if $\lim_{n\to\infty} \frac{x_n}{y_n} = 1$, and we write this as $x_n \sim y_n$. If $x_n$ counts certain objects of size $n$, then we say almost all of the objects have a certain property if the fraction of size-$n$ objects with that property goes to $1$ as $n \to \infty$.

A \emph{bicolored graph} is a graph in which each vertex is colored green or red such that no two adjacent vertices are the same color. In other words, it is a bipartite graph with a chosen bipartition. Green and red are not interchangeable, meaning that swapping the color of every vertex will generally result in a different bicolored graph; that is, the two parts in the chosen bipartition are an \emph{ordered} pair. Bicolored graphs are often studied as a step towards bipartite graphs, as in \cite{Hanlon, GDG}.

Let $\BC$ be the species of bicolored graphs, where the isomorphisms between structures are the graph isomorphisms that preserve color. The species $\BC$ is fundamental, in the sense that its cycle index series and other associated generating functions have explicit formulas that can be derived from scratch rather than built up from those of simpler species (see \cite{GDG}). In particular, the number of labeled bicolored graphs is simple to express and routine to derive:
\begin{equation} |\BC[n]| = \sum_{k=0}^n \binom{n}{k} 2^{k(n-k)}, \label{eqn:bicolored-formula} \end{equation}
because we choose a subset of $k$ vertices to color green, and then for each of the $k(n-k)$ pairs of opposite-color vertices we choose whether to make them adjacent.

In this section, we prove some identities about the species of split graphs and the species of bicolored graphs (Section \ref{sec:relating-split-bicolored}); and we prove some asymptotic results on the number of split graphs and unbalanced split graphs (Sections \ref{sec:almost-all-labeled} and \ref{sec:almost-all-unlabeled}).

\subsection{The species of bicolored graphs} \label{sec:relating-split-bicolored}

Let $\BCiso$ be the species of bicolored graphs in which no green vertex is isolated. Then clearly $\BC = E \cdot \BCiso$, because $E$ provides the set of isolated green vertices. Collins and Trenk \cite{CollinsTrenk} find a bijection between unlabeled split graphs and unlabeled $\BCiso$-graphs (they say ``$XY$-graph'' where we say ``bicolored graph''), and their bijection easily proves this species identity:

\begin{theorem} \label{thm:CS-BCiso}
$\CS = \BCiso = \frac{\BC}{E}$.
\end{theorem}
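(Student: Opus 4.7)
The plan is to prove the two equalities separately. The second one, $\BCiso = \BC / E$, is immediate from the factorization $\BC = E \cdot \BCiso$ already stated in the excerpt: since $E_0 = 1$, the species $E$ is a unit in the ring of virtual species, so we may divide. The main work is therefore to establish the species isomorphism $\CS = \BCiso$, which I will do by exhibiting a natural bijection $\alpha_I \colon \CS[I] \to \BCiso[I]$ for each label set $I$.

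The forward map erases the clique structure inside $K$. Given a colored split graph $G$ on $I$ with $S$-max partition $K \cup S$, let $\alpha_I(G)$ be the graph on $I$ whose edges are exactly the edges of $G$ running between $K$ and $S$, with the vertices of $K$ colored green and those of $S$ red. By construction $\alpha_I(G)$ has no monochromatic edge, so it is a bicolored graph. To see that no green vertex is isolated in $\alpha_I(G)$, suppose $v \in K$ has no neighbor in $S$; then $(K \smallsetminus \{v\}) \cup (S \cup \{v\})$ is a $KS$-partition with a strictly larger stable set, contradicting $S$-maximality. Since the construction only looks at the $KS$-partition and the edges of $G$, it commutes with the action of relabelings $\varphi \colon I \to J$, so the maps $\alpha_I$ are natural.

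For the inverse, given a bicolored graph $H$ on $I$ with no isolated green vertex, define $\beta_I(H)$ by adding to $H$ every edge between two green vertices, and declaring the green set to be $K$ and the red set to be $S$. Then $K$ is a clique by construction and $S$ is a stable set because $H$ was bicolored, so $\beta_I(H)$ is a split graph with a specified $KS$-partition. This partition is $S$-max: otherwise Proposition \ref{prop:CCT} produces a vertex of $K$ adjacent to no vertex of $S$, which in $H$ would be a green vertex with no red neighbor and hence an isolated green vertex, a contradiction. Thus $\beta_I(H)$ is a colored split graph. The composites $\alpha_I \circ \beta_I$ and $\beta_I \circ \alpha_I$ are identities because deleting the $K$-$K$ edges and then re-adding them (and vice versa) returns the original structure, and the color data is preserved at every step. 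This shows $\CS = \BCiso$ as species.

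The only step that requires genuine care is the verification that the partition produced by $\beta_I$ is $S$-max; here I expect to lean directly on Proposition \ref{prop:CCT}, which characterizes failure of $S$-maximality in terms of a movable vertex, exactly matching the isolated-green-vertex condition cut out by $\BCiso$. Combining $\CS = \BCiso$ with the already-noted identity $\BC = E \cdot \BCiso$ gives the full statement of the theorem.
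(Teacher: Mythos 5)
Your proposal is correct and is essentially the paper's own proof: both directions use the same bijection (delete the edges inside the green clique, respectively add them back), with the $S$-maximality of the resulting partition verified via the observation that a vertex of $K$ with no neighbor in $S$ is exactly an isolated green vertex. The only cosmetic difference is that the paper phrases this verification in terms of swing vertices rather than invoking Proposition \ref{prop:CCT} directly.
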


\begin{proof}
The equality $\BCiso = \frac{\BC}{E}$ is clear (see the paragraph preceding the theorem), and note that $E$ is a unit in the ring of virtual species.

Now we define a bijection between labeled colored split graphs and bicolored graphs with no isolated green vertex: given a colored split graph, remove the edges in the green clique. The inverse is: given a bicolored graph with no isolated green vertex, add an edge between every pair of green vertices, making the green vertices a clique. A swing vertex in $K$ in the colored split graph becomes an isolated green vertex in the bicolored graph, and the converse is also true; consequently the bijection does take colored split graphs to $\BCiso$-graphs, and likewise for the inverse map. Since this bijection respects isomorphism of colored graphs, the equality of species is proved.
\end{proof}

Using Theorem \ref{thm:CS-BCiso} and some of the species identities from Sections \ref{sec:identities} and \ref{sec:maintheorem}, we obtain a relationship between $\Split$ and $\BC$:

\begin{theorem} \label{thm:bc-split}
$\BC = \frac{1}{1-X} \cdot \Split$.
\end{theorem}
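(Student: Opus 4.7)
The plan is to obtain $\BC = \frac{1}{1-X} \cdot \Split$ by combining Theorem \ref{thm:CS-BCiso} with the identities for $\CS$ and $\UK$ that were derived in the proof of Theorem \ref{thm:main}. Since Theorem \ref{thm:CS-BCiso} already gives $\BC = E \cdot \CS$, the theorem will follow once I show the intermediate identity
\[ \CS = \frac{\Split}{(1-X) \cdot E}, \]
because then $\BC = E \cdot \CS = \frac{E}{(1-X) \cdot E} \cdot \Split = \frac{1}{1-X} \cdot \Split$, with all divisions making sense in the ring of virtual species since the divisors have constant term $1$.

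To prove the intermediate identity, I would start from Equation (\ref{eqn:cs}), namely $\CS = \frac{\Split - \UK}{1 - X \cdot E_{\ge 1}}$, and plug in the expression for $\UK$ from Equation (\ref{eqn:uk}), $\UK = \frac{E - 1 - X}{(1-X) \cdot E} \cdot \Split$. This gives
\[ \Split - \UK = \Split \cdot \left(1 - \frac{E - 1 - X}{(1-X) \cdot E}\right) = \Split \cdot \frac{(1-X) \cdot E - (E - 1 - X)}{(1-X) \cdot E}. \]
The key algebraic step is to simplify the numerator: expanding $(1-X)\cdot E - E + 1 + X = 1 + X - X \cdot E = 1 - X \cdot (E - 1) = 1 - X \cdot E_{\ge 1}$. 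Hence $\Split - \UK = \frac{1 - X \cdot E_{\ge 1}}{(1-X)\cdot E} \cdot \Split$, and dividing by $1 - X \cdot E_{\ge 1}$ yields exactly the claimed formula for $\CS$.

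There is no real obstacle here; the theorem is a direct consequence of results already proved, and the only work is the cancellation $(1-X)\cdot E - (E - 1 - X) = 1 - X \cdot E_{\ge 1}$, which is a one-line calculation in the ring of virtual species. The cleanest write-up would be a short chain of equalities citing Theorem \ref{thm:CS-BCiso}, Equation (\ref{eqn:cs}), and Equation (\ref{eqn:uk}) in that order, with the simplification above as the single nontrivial manipulation.
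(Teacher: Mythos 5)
Your proof is correct, and it uses the same ingredients as the paper's proof (Theorem \ref{thm:CS-BCiso} together with the identities from the proof of Theorem \ref{thm:main}), but it pivots through a different intermediate identity. The paper combines Theorem \ref{thm:uk} with Theorem \ref{thm:CS-BCiso} to get $\UK = \frac{E_{\ge2}}{E}\cdot\BC$, sets this equal to Equation \eqref{eqn:uk}, and cancels the common factor $\frac{E_{\ge2}}{E}$; you instead derive the closed form $\CS = \frac{\Split}{(1-X)\cdot E}$ from Equations \eqref{eqn:cs} and \eqref{eqn:uk} and then multiply by $E$. Your algebra checks out: $(1-X)\cdot E - (E-1-X) = 1 + X - X\cdot E = 1 - X\cdot E_{\ge1}$, so the cancellation against the denominator of \eqref{eqn:cs} is exact. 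One small advantage of your route is worth noting: every division you perform is by a virtual species with constant term $1$, hence by a unit, whereas the paper's cancellation of $\frac{E_{\ge2}}{E}$ amounts to cancelling $E_{\ge2}$, which is \emph{not} a unit (its constant term is $0$) and so strictly requires an extra word about why it is cancellable. In that respect your write-up is, if anything, slightly cleaner; the intermediate identity $\CS = \frac{\Split}{(1-X)\cdot E}$ is also a pleasant statement in its own right.
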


\begin{proof}
By Theorem \ref{thm:uk}, $\UK = E_{\ge2} \cdot \CS$. By Theorem \ref{thm:CS-BCiso}, $\CS = \frac{\BC}{E}$. Therefore,
\begin{equation} \UK = \frac{E_{\ge2}}{E} \cdot \BC. \label{eqn:uk-bc} \end{equation}
Equation \eqref{eqn:uk} is that 
\[ \UK = \frac{E_{\ge2}}{(1-X)\cdot E} \cdot \Split; \]
setting this equal to (\ref{eqn:uk-bc}) and canceling the factor of $\frac{E_{\ge2}}{E}$ yields the desired result.
\end{proof}

Note that $\frac{1}{1-X}$ is the species of sequences, or linear orders. Is there a direct combinatorial way in which a bicolored graph is a split graph and a sequence?

By passing to the labeled and unlabeled generating functions, we obtain:

\begin{corollary} \label{cor:BC-Split-gf}
$\BC(x) = \frac{1}{1-x}\,\Split(x)$ and $\widetilde{\BC}(x) = \frac{1}{1-x}\,\widetilde{\Split}(x)$. \hfill $\square$
\end{corollary}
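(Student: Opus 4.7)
The plan is to deduce the corollary directly from Theorem \ref{thm:bc-split} by applying the two standard generating-function homomorphisms to the species identity $\BC = \frac{1}{1-X} \cdot \Split$. Recall from Section \ref{sec:species} that both the map $\FF \mapsto \FF(x)$ (exponential generating function of labeled structures) and the map $\FF \mapsto \widetilde{\FF}(x)$ (ordinary generating function of unlabeled structures) respect addition and multiplication of species, and extend to the ring of virtual species, so in particular they send the multiplicative inverse of a unit to the multiplicative inverse of its image. The virtual species $1-X$ has constant term $1$, hence is a unit, and its reciprocal $\frac{1}{1-X}$ therefore has well-defined labeled and unlabeled generating functions.

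First I would compute those two generating functions. Since $X$ is the species of singletons, we have $X(x) = x$ and $\widetilde{X}(x) = x$, so in both cases the image of $1-X$ is $1-x$, and the image of $\frac{1}{1-X}$ is $\frac{1}{1-x}$. (As a sanity check: $\frac{1}{1-X}$ is the species of linear orders, for which there are $n!$ labeled structures and $1$ unlabeled structure per size, giving $\sum_n n!\,\frac{x^n}{n!} = \frac{1}{1-x}$ and $\sum_n x^n = \frac{1}{1-x}$.)

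Applying the labeled generating function map to both sides of $\BC = \frac{1}{1-X}\cdot\Split$ then yields $\BC(x) = \frac{1}{1-x}\,\Split(x)$, and applying the unlabeled generating function map yields $\widetilde{\BC}(x) = \frac{1}{1-x}\,\widetilde{\Split}(x)$. There is no real obstacle here: once Theorem \ref{thm:bc-split} is in hand, the corollary is a one-step formal consequence of the fact that species isomorphism refines both notions of generating-function equality.
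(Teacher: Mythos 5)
Your proposal is correct and is exactly the paper's argument: the paper derives the corollary by passing Theorem \ref{thm:bc-split} through the labeled and unlabeled generating-function homomorphisms, just as you do. Your extra verification that $\frac{1}{1-X}$ maps to $\frac{1}{1-x}$ in both cases (via the linear-orders interpretation) is a fine elaboration of the same one-step argument.
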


The latter equation was already known, as it comes from $\widetilde{U}(x) = x\,\widetilde{\BC}(x)$ \cite[Thm.\ 15]{CollinsTrenk} and $\widetilde{U}(x) = \frac{x}{1-x}\,\widetilde{\Split}(x)$ (which is \eqref{eqn:identity}). The former equation, when rearranged, implies that the number of labeled split graphs on $n$ vertices is $|\BC[n]| - n\,|\BC[n-1]|$; as we have an explicit formula for $|\BC[n]|$ in \eqref{eqn:bicolored-formula}, we obtain:

\begin{corollary} \label{cor:Bina}
The number of labeled split graphs on $n$ vertices is
\[ \sum_{k=0}^n \binom{n}{k} 2^{k(n-k)} - n\sum_{k=0}^{n-1} \binom{n-1}{k} 2^{k(n-k-1)}. \tag*{$\square$} \]
\end{corollary}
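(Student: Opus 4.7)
The plan is to read off the coefficient of $x^n/n!$ from the first equation in Corollary \ref{cor:BC-Split-gf}, then substitute the explicit formula \eqref{eqn:bicolored-formula}. Concretely, multiplying both sides of $\BC(x) = \frac{1}{1-x}\,\Split(x)$ by $(1-x)$ gives $\Split(x) = \BC(x) - x\,\BC(x)$ as exponential generating functions.

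The key computational step is the $n!$-weighted coefficient extraction from $x\,\BC(x)$: since
\[ x \cdot \frac{x^{n-1}}{(n-1)!} = n\,\frac{x^n}{n!}, \]
the coefficient of $x^n/n!$ in $x\,\BC(x)$ equals $n\,|\BC[n-1]|$. Consequently
\[ |\Split[n]| = |\BC[n]| - n\,|\BC[n-1]|. \]

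Finally, I would substitute \eqref{eqn:bicolored-formula}, which gives $|\BC[n]| = \sum_{k=0}^n \binom{n}{k} 2^{k(n-k)}$ and $|\BC[n-1]| = \sum_{k=0}^{n-1}\binom{n-1}{k}2^{k(n-1-k)}$. Noting that $k(n-1-k) = k(n-k-1)$, this yields exactly the claimed closed form, completing the proof.

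There is no real obstacle here; the corollary is a transparent consequence of the generating-function identity in Corollary \ref{cor:BC-Split-gf} combined with the already-derived formula \eqref{eqn:bicolored-formula}. The only point requiring any care is remembering the factor of $n$ coming from the EGF convention when multiplying by $x$, but that is a one-line observation rather than a genuine difficulty.
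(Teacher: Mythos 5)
Your proposal is correct and matches the paper's own derivation: the paper likewise rearranges $\BC(x) = \frac{1}{1-x}\,\Split(x)$ to get $|\Split[n]| = |\BC[n]| - n\,|\BC[n-1]|$ and then substitutes \eqref{eqn:bicolored-formula}. The factor of $n$ from the EGF convention is handled correctly, so there is nothing to add.
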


This is a different formula than the one obtained by B\'ina and P\v{r}ibil \cite{BP}; theirs gives the number of labeled split graphs as
\[ \label{eqn:Bina} 1 + \sum_{k=2}^n \binom{n}{k} \left( (2^k-1)^{n-k} - \sum_{j=1}^{n-k} \frac{jk}{j+1} \binom{n-k}{j} (2^{k-1}-1)^{n-k-j} \right). \]
Apparently this gives the same numbers as Corollary \ref{cor:Bina}; this of course is assured by the validity of both their proof and ours, and we have also checked by computer that the two agree up to $n=318$ (the $n$ that had been reached by the time we finished our sandwich). It would probably not be too hard to find an elementary proof that they are equal, by rearranging the sums and using properties of binomial coefficients.

\subsection{Almost all labeled split graphs are balanced} \label{sec:almost-all-labeled}

Let $b_n$ be the number of labeled bicolored graphs on $n$ vertices, let $s_n$ be the number of labeled split graphs on $n$ vertices, and let $u_n$ be the number of labeled unbalanced split graphs on $n$ vertices. In this subsection we prove that almost all labeled split graphs are balanced: $\lim_{n\to\infty} \frac{s_n - u_n}{s_n} = 1$, or equivalently $\lim_{n\to\infty} \frac{u_n}{s_n} = 0$.

Bender, Richmond, and Wormald \cite{BRW} show that
\begin{equation} \label{eqn:BRW} s_n\, \sim\, b_n\, =\, \sum_{k=0}^n \binom{n}{k} 2^{k(n-k)}\, \sim\, c(n) \, \binom{n}{\lfloor n/2 \rfloor} \,2^{n^2/4} \end{equation}
where $c(n) > 0$ depends only on whether $n$ is even or odd --- in particular,
\[ c(n) = \begin{cases}
\sum_{k\in\mathbb{Z}} 2^{-k^2} \approx 2.128937 & \text{if $n$ is even;} \\
\sum_{k\in\mathbb{Z}} 2^{-(k+\frac{1}{2})^2} \approx 2.128931 & \text{if $n$ is odd.}
\end{cases} \]
Also note that the equality $b_n = \sum_{k=0}^n \binom{n}{k} 2^{k(n-k)}$ in \eqref{eqn:BRW} is just \eqref{eqn:bicolored-formula}.

From \eqref{eqn:BRW} we immediately obtain this lemma:
\begin{lemma} \label{lem:b-ratio}
$\frac{s_n}{s_{n-1}} \ge 2^{(n+1)/2}$ and $\frac{b_n}{b_{n-1}} \ge 2^{(n+1)/2}$, for large enough $n$. \hfill $\square$
\end{lemma}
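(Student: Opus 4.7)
The plan is to extract the ratio $b_n/b_{n-1}$ directly from the asymptotic \eqref{eqn:BRW} and observe that it grows like $2^{(2n+3)/4}$, which strictly exceeds $2^{(n+1)/2}$ by a constant factor of $2^{1/4}$. Once this is established for $b_n$, the bound for $s_n$ follows for free from $s_n \sim b_n$, which is already part of \eqref{eqn:BRW}, since $s_n/s_{n-1} \sim b_n/b_{n-1}$.

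Concretely, I would divide the asymptotic formula at $n$ by the one at $n-1$, obtaining
\[ \frac{b_n}{b_{n-1}} \sim \frac{c(n)}{c(n-1)} \cdot \frac{\binom{n}{\lfloor n/2 \rfloor}}{\binom{n-1}{\lfloor (n-1)/2 \rfloor}} \cdot 2^{(2n-1)/4}, \]
and handle the three factors separately. The first factor is harmless: by the excerpt's formulas, $c(n)$ takes one of two explicit values (both near $2.1289$), so the ratio $c(n)/c(n-1)$ stays within $10^{-5}$ of $1$ for every $n$, regardless of parities. The middle factor tends to $2$, which I would verify either by Stirling's formula or by direct computation (it equals exactly $2$ when $n$ is even, via Pascal's rule together with the symmetry $\binom{n-1}{k}=\binom{n-1}{n-1-k}$, and it equals $\frac{2n}{n+1}$ when $n$ is odd). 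The exponential factor $2^{(n^2-(n-1)^2)/4} = 2^{(2n-1)/4}$ is already in closed form.

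Combining the three factors gives
\[ \frac{b_n}{b_{n-1}} \sim 2 \cdot 2^{(2n-1)/4} = 2^{(2n+3)/4} = 2^{1/4} \cdot 2^{(n+1)/2}. \]
Since $2^{1/4} > 1$ is a strict constant slack, the defining property of $\sim$ immediately yields $b_n/b_{n-1} \ge 2^{(n+1)/2}$ for all sufficiently large $n$, and the same reasoning (via $s_n \sim b_n$) gives the analogous bound for $s_n/s_{n-1}$. There is no real obstacle here; the one mild nuisance is that the $c(n)/c(n-1)$ factor does not converge, because $c$ is parity-dependent, but since it is pinned between two very close positive constants it is absorbed cleanly into the $\sim$ and contributes nothing substantive.
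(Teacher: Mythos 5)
Your proposal is correct and is essentially the argument the paper intends: the lemma is stated as an immediate consequence of \eqref{eqn:BRW}, and you have simply spelled out the ratio computation (exponential factor $2^{(2n-1)/4}$, central-binomial factor tending to $2$, parity-dependent constant pinned near $1$) that makes it immediate. Your handling of the non-convergent $c(n)/c(n-1)$ factor via the strict $2^{1/4}$ slack is exactly the right way to close that small gap.
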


\begin{theorem} \label{thm:almost-all-balanced}
Almost all labeled split graphs are balanced: $\lim_{n\to\infty} \frac{u_n}{s_n} = 0$.
\end{theorem}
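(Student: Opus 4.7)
The plan is to start from Theorem~\ref{thm:labeled-U-S}, which states $\UU(x) = f(x)\,\Split(x)$ with $f(x) = \frac{2-x-2e^{-x}}{1-x}$. Writing $f(x) = \sum_{k \ge 0} f_k\, x^k/k!$ and applying the EGF product formula gives
\[ u_n = \sum_{k=0}^{n} \binom{n}{k}\,f_k\,s_{n-k}. \]
The numerator of $f$ vanishes at $0$, so $f_0 = 0$; and a direct expansion yields $f_k = k!\bigl(1 + 2\sum_{j=2}^{k}(-1)^{j+1}/j!\bigr)$, which tends to $(1 - 2/e)\,k!$ and in particular satisfies $|f_k| \le 3\,k!$ for all $k$. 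Thus it suffices to show that $\sum_{k=1}^{n} \binom{n}{k}\,k!\,s_{n-k}/s_n \to 0$.

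The key tool is Lemma~\ref{lem:b-ratio}, which I iterate. Fix $n_0$ so that $s_m/s_{m-1} \ge 2^{(m+1)/2}$ for all $m \ge n_0$. For $1 \le k \le n - n_0$, telescoping this estimate from $m = n - k + 1$ up to $m = n$ gives $s_n/s_{n-k} \ge 2^{kn/2 - k^2/4 + 3k/4}$. Combining with $\binom{n}{k}\,k! \le n^k$ yields
\[ \binom{n}{k}\,k!\,\frac{s_{n-k}}{s_n} \;\le\; n^k\,2^{-kn/2 + k^2/4 - 3k/4} \;\le\; 2^{-kn/8} \]
for $n$ large enough (since $k \le n$ forces $k^2/4 \le kn/4$, and $\log_2 n \le n/8$). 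Summing over $k \in [1, n - n_0]$ gives a geometric bound of order $2^{-n/8}$.

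For the remaining $O(1)$ boundary terms with $k > n - n_0$, the factor $s_{n-k}$ is bounded by some constant $M$, while by~\eqref{eqn:BRW} we have $s_n \sim c(n)\,\binom{n}{\lfloor n/2 \rfloor}\,2^{n^2/4}$, which grows super-exponentially. Hence each such term contributes at most $3\,M\,n!/s_n = o(1)$, and their total is also $o(1)$. Combining both regimes shows $u_n/s_n \to 0$. The main obstacle is estimation rather than conceptual insight: one must iterate Lemma~\ref{lem:b-ratio} carefully to obtain an exponential decay strong enough to dominate the polynomial blow-up of $\binom{n}{k}\,k!$, and separately treat the boundary regime where the lemma's ``$n$ large'' hypothesis does not apply.
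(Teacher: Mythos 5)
Your proof is correct, and it rests on the same two pillars as the paper's: the identity $\UU(x) = \frac{2-x-2e^{-x}}{1-x}\,\Split(x)$ from Theorem~\ref{thm:labeled-U-S} and the ratio bound of Lemma~\ref{lem:b-ratio}. The difference is in how the convolution is tamed. You expand $f(x)$ in exponential form, which forces you to carry the factor $\binom{n}{k}\,k! \le n^k$ and then defeat it by telescoping Lemma~\ref{lem:b-ratio} over $k$ steps to get $s_n/s_{n-k} \ge 2^{kn/2 - k^2/4 + 3k/4}$, plus a separate treatment of the boundary regime $k > n - n_0$ where the lemma does not apply. The paper instead observes that the \emph{ordinary} power-series coefficients $a_i$ of $A(x) = \frac{2-x-2e^{-x}}{1-x}$ satisfy $0 \le a_i \le 1$ with $a_0 = 0$ (they are partial sums of an alternating series converging to $1 - 2/e$), so that dividing by $n!$ gives $\frac{u_n}{n!} = \sum_{k=0}^{n-1} a_{n-k}\,\frac{s_k}{k!} \le \sum_{k=0}^{n-1} \frac{s_k}{k!} \le n\,\frac{s_{n-1}}{(n-1)!}$, after which a single application of Lemma~\ref{lem:b-ratio} yields $u_n/s_n \le n^2/2^{(n+1)/2} \to 0$. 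Your route costs more estimation (the iterated telescoping and the two-regime split) but needs only the crude bound $|f_k| \le 3\,k!$ on the EGF coefficients; the paper's route buys a three-line finish at the price of the slightly finer observation that the ordinary coefficients lie in $[0,1]$. Both are valid proofs of the theorem.
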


\begin{proof}
Let $A(x) = \frac{2 - x - 2e^{-x}}{1-x}$. From Theorem \ref{thm:labeled-U-S}, we have $\UU(x) = A(x)\,\Split(x)$. Let $a_i$ be the coefficient of $x^i$ in $A(x)$; then $0 \le a_i \le 1$ for all $i$, and $a_0 = 0$. Then
\[ \frac{u_n}{n!} = \sum_{k=0}^{n-1} a_{n-k} \frac{s_k}{k!} \le \sum_{k=0}^{n-1} \frac{s_k}{k!}. \]
By Lemma \ref{lem:b-ratio}, for large enough $n$, the highest term in this sum is the last one, $\frac{s_{n-1}}{(n-1)!}$. Thus, for large enough $n$,
\begin{align*}
\frac{u_n}{n!} &\le n \frac{s_{n-1}}{(n-1)!} \\
u_n &\le n^2 s_{n-1} \\
\frac{u_n}{s_n} &\le \frac{n^2 s_{n-1}}{s_n} \le \frac{n^2}{2^{(n+1)/2}}
\end{align*}
(this last inequality uses Lemma \ref{lem:b-ratio}), and $\frac{n^2}{2^{(n+1)/2}}$ goes to $0$ as $n \to \infty$.
\end{proof}

\subsection{Almost all unlabeled split graphs are balanced} \label{sec:almost-all-unlabeled}

Let $\widetilde{b}_n$ be the number of unlabeled bicolored graphs on $n$ vertices, let $\widetilde{s}_n$ be the number of unlabeled split graphs on $n$ vertices, and let $\widetilde{u}_n$ be the number of unlabeled unbalanced split graphs on $n$ vertices. We will use the following ``folklore'' result about bicolored graphs (proved e.g.\ in \cite{Wright}):

\begin{theorem}[see \cite{Wright}] \label{thm:unlabeled-bicolored-asymptotic}
The number of unlabeled bicolored graphs on $n$ vertices asymptotically equals the number of labeled bicolored graphs on $n$ vertices divided by $n!$. That is, $\widetilde{b}_n \sim b_n/n!$.
\end{theorem}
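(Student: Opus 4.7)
The natural approach is via Burnside's (Cauchy--Frobenius) lemma:
\[
\widetilde{b}_n \;=\; \frac{1}{n!} \sum_{\sigma \in \Perm_n} |\mathrm{Fix}_{\BC[n]}(\sigma)|,
\]
where $\mathrm{Fix}_{\BC[n]}(\sigma)$ denotes the set of labeled bicolored graphs fixed by $\sigma$ (viewing $\sigma$ as permuting the vertex set, hence acting on both the coloring and the edge set). The identity permutation contributes $b_n$, which gives the easy direction $\widetilde{b}_n \ge b_n/n!$ for free. The work lies entirely in showing
\[
\sum_{\sigma \ne e} |\mathrm{Fix}_{\BC[n]}(\sigma)| \;=\; o(b_n),
\]
from which the matching upper bound, and hence $\widetilde{b}_n \sim b_n/n!$, follows.

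To estimate $|\mathrm{Fix}_{\BC[n]}(\sigma)|$ for a non-identity $\sigma$, note that a fixed bicolored graph must have a coloring in which every cycle of $\sigma$ is monochromatic and an edge set that is a union of $\sigma$-orbits on cross-color pairs. For a coloring with $k$ red vertices, the unrestricted count of consistent edge sets is $2^{k(n-k)}$; imposing $\sigma$-invariance reduces this to $2^{r(\sigma,c)}$, where $r(\sigma,c)$ is the number of $\sigma$-orbits on opposite-color pairs, which is strictly less as soon as $\sigma$ moves some vertex. The key quantitative point is that each nontrivial cycle of $\sigma$ forces the moved vertices to have identical neighborhoods on the opposite side, producing a suppression factor of $2^{-\Omega(n)}$ in $|\mathrm{Fix}(\sigma)|$ relative to the dominant (near-balanced) contribution to $b_n \sim c(n)\binom{n}{\lfloor n/2\rfloor}\,2^{n^2/4}$ from \eqref{eqn:BRW}.

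One then partitions the sum over $\sigma \ne e$ by cycle type. The main obstacle is balancing two regimes. For $\sigma$ close to the identity (say a single transposition) there are only $O(n^2)$ such permutations, but each has $|\mathrm{Fix}(\sigma)|$ as large as $b_n\cdot 2^{-\Omega(n)}$; nonetheless the product is $o(b_n)$. For $\sigma$ with many nontrivial cycles the count of such $\sigma$ can approach $n!$, but the suppression then compounds to $2^{-\Omega(n^2)}$, which comfortably dominates. Summing the bounds across all cycle types gives the desired $o(b_n)$ bound. Equivalently, one can phrase the argument as showing that almost all labeled bicolored graphs have trivial color-preserving automorphism group, so that $b_n$ and $n!\,\widetilde{b}_n$ differ only by a lower-order correction; this is the content of the proof in \cite{Wright}.
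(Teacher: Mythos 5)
The paper does not actually prove this theorem: it is imported as a known ``folklore'' result and attributed to Wright \cite{Wright}, so there is no internal proof to compare yours against. Your Burnside/Cauchy--Frobenius outline is the standard route to statements of the form ``almost all labeled structures are asymmetric,'' and its skeleton is correct: the reduction to showing $\sum_{\sigma\ne e}\,\lvert\mathrm{Fix}_{\BC[n]}(\sigma)\rvert = o(b_n)$, the observation that a coloring fixed by $\sigma$ must be constant on each cycle of $\sigma$, and the count $2^{r(\sigma,c)}$ of $\sigma$-invariant edge sets for a fixed coloring $c$ are all right.

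As written, though, this is a plan rather than a proof: all of the quantitative content sits in the asserted bounds $\lvert\mathrm{Fix}(\sigma)\rvert\le b_n\,2^{-\Omega(n)}$ and the claim that the savings ``compound to $2^{-\Omega(n^2)}$,'' and neither is derived. The point you would actually have to confront is that the per-cycle saving is not uniformly $2^{-\Omega(n)}$ coloring by coloring: a nontrivial cycle lying in the green class of a coloring with $k$ green and $n-k$ red vertices saves only a factor of roughly $2^{-(n-k)}$, which is $O(1)$ when the coloring is very unbalanced and vanishes entirely when one color class is empty. So one must either split $\sum_c 2^{r(\sigma,c)}$ into near-balanced colorings (where each nontrivial cycle genuinely costs $2^{-\Omega(n)}$) and unbalanced colorings (where $\binom{n}{k}2^{k(n-k)}$ is already $2^{n^2/4-\Omega(n^2)}$ and needs no saving), or use a uniform bound such as $r(\sigma,c)\le\frac{1}{2}\bigl(k(n-k)+F\bigr)$ with $F$ the number of $\sigma$-fixed cross pairs. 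After that, the sum over cycle types --- at most $n^{m}$ permutations moving exactly $m$ points against a saving on the order of $2^{-mn/4}$ in the dominant range --- still has to be carried out and compared with $b_n\sim c(n)\binom{n}{\lfloor n/2\rfloor}2^{n^2/4}$. None of this is conceptually difficult, and your two-regime discussion shows you see where the tension is, but the proposal currently asserts the outcome of each of these steps rather than establishing it.
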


We will now use this to show that $\lim_{n\to\infty} \frac{\widetilde{u}_n}{\widetilde{s}_n} = 0$. Along the way, we also find that $\widetilde{s}_n \sim \widetilde{b}_n$.

Theorem \ref{thm:unlabeled-bicolored-asymptotic} allows us to convert our results on labeled graphs into results on unlabeled graphs. From \eqref{eqn:BRW}, we obtain:

\begin{lemma}
$\widetilde{b}_n \sim \frac{c(n)}{\lfloor n/2 \rfloor!\, \lceil n/2 \rceil!} \,2^{n^2/4}$, where $c(n)$ is defined as in Section \ref{sec:almost-all-labeled}. \hfill $\square$
\end{lemma}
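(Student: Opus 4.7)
The plan is to combine the two asymptotic statements already available in the paper. Theorem \ref{thm:unlabeled-bicolored-asymptotic} gives $\widetilde{b}_n \sim b_n/n!$, while the chain of asymptotics \eqref{eqn:BRW} due to Bender, Richmond, and Wormald gives $b_n \sim c(n)\,\binom{n}{\lfloor n/2 \rfloor}\,2^{n^2/4}$. Since the relation $\sim$ is multiplicative (constants and $n!$ factors pass through limits of ratios without issue), dividing both sides of the second asymptotic by $n!$ and substituting into the first yields
\[ \widetilde{b}_n \;\sim\; \frac{b_n}{n!} \;\sim\; \frac{c(n)}{n!}\binom{n}{\lfloor n/2 \rfloor}\,2^{n^2/4}. \]

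The only remaining step is to rewrite the central binomial factor. By the definition of the binomial coefficient,
\[ \binom{n}{\lfloor n/2 \rfloor} \;=\; \frac{n!}{\lfloor n/2 \rfloor!\,\lceil n/2 \rceil!}, \]
which holds regardless of the parity of $n$ (if $n$ is even, $\lfloor n/2 \rfloor = \lceil n/2 \rceil = n/2$; if $n$ is odd, the two floors and ceilings are consecutive integers summing to $n$). Substituting this in cancels the $n!$ in the denominator, producing exactly
\[ \widetilde{b}_n \;\sim\; \frac{c(n)}{\lfloor n/2 \rfloor!\,\lceil n/2 \rceil!}\,2^{n^2/4}. \]

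There is no real obstacle here: the lemma is just the algebraic simplification of what one obtains by plugging the labeled asymptotic into Theorem \ref{thm:unlabeled-bicolored-asymptotic}. The content of the lemma lies entirely in the nontrivial inputs \eqref{eqn:BRW} and Theorem \ref{thm:unlabeled-bicolored-asymptotic}, both of which are cited from the literature; my ``proof'' amounts to a transitive application of $\sim$ followed by the identity $\binom{n}{\lfloor n/2\rfloor} = n!/(\lfloor n/2\rfloor!\,\lceil n/2\rceil!)$.
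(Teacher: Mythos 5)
Your proof is correct and matches the paper's approach exactly: the paper leaves this lemma as an immediate consequence of Theorem \ref{thm:unlabeled-bicolored-asymptotic} combined with \eqref{eqn:BRW}, which is precisely the transitive application of $\sim$ and the binomial identity you spell out.
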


And from Lemma \ref{lem:b-ratio}, we obtain:

\begin{lemma} \label{lem:b-ratio-unlabeled}
$\frac{\widetilde{b}_n}{\widetilde{b}_{n-1}} \ge 2^{(n+1)/2} / n$, for large enough $n$. \hfill $\square$
\end{lemma}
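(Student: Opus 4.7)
The plan is to invoke the explicit asymptotic for $\widetilde{b}_n$ proved in the preceding lemma and compute the ratio $\widetilde{b}_n/\widetilde{b}_{n-1}$ directly, showing that its asymptotic value exceeds $2^{(n+1)/2}/n$ by a multiplicative constant strictly greater than $1$. Any such strict inequality in the limit immediately implies the $\ge$ statement for large enough $n$.

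First I would break the ratio $\widetilde{b}_n/\widetilde{b}_{n-1}$ into three factors coming from $\widetilde{b}_n \sim \frac{c(n)}{\lfloor n/2\rfloor!\,\lceil n/2\rceil!}\,2^{n^2/4}$. The exponential factor contributes $2^{n^2/4}/2^{(n-1)^2/4} = 2^{(2n-1)/4}$. The constant factor contributes $c(n)/c(n-1)$, which tends to $1$ because both values lie in the tiny interval around $2.12893$ given in Section~\ref{sec:almost-all-labeled}. The factorial factor requires a brief case split: if $n=2m$ is even it equals $(m-1)!\,m!/(m!\,m!) = 2/n$, and if $n=2m+1$ is odd it equals $m!\,m!/(m!\,(m+1)!) = 2/(n+1)$.

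Combining these yields $\widetilde{b}_n/\widetilde{b}_{n-1} \sim 2^{(2n+3)/4}/n$ (with an extra factor of $n/(n+1)$ in the odd case, which also tends to $1$). Since the target quantity is $2^{(n+1)/2}/n = 2^{(2n+2)/4}/n$, the ratio of the left-hand side to the right-hand side tends to $2^{1/4} > 1$. Therefore for all sufficiently large $n$ this ratio exceeds $1$, which is exactly the claim.

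There is no real obstacle here; the only mildly delicate point is keeping the factorial case analysis straight, and making sure the constants $c(n)/c(n-1) \to 1$ and (in the odd case) $n/(n+1) \to 1$ are absorbed into the asymptotic before taking the final comparison. The slack of $2^{1/4}$ in the exponent is comfortably large, so the ``for large enough $n$'' quantifier takes care of all these vanishing error terms at once.
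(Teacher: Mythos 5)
Your proof is correct and is essentially the computation the paper intends: the paper presents this lemma as an immediate consequence of Lemma \ref{lem:b-ratio} together with $\widetilde{b}_n \sim b_n/n!$ (Theorem \ref{thm:unlabeled-bicolored-asymptotic}), which amounts to the same division of asymptotic estimates you carry out from the explicit formula for $\widetilde{b}_n$. Your version has the small virtue of exhibiting the multiplicative slack $2^{1/4}$ explicitly (and noting that $c(n)/c(n-1)$ and $n/(n+1)$ stay within a negligible factor of $1$), which is precisely what justifies converting an asymptotic equivalence into the stated inequality for all large $n$.
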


We now give a result on $\widetilde{s}_n$ analogous to \eqref{eqn:BRW}:

\begin{theorem} \label{thm:snsimbn-unlabeled}
The number of unlabeled split graphs on $n$ vertices asymptotically equals the number of unlabeled bicolored graphs on $n$ vertices: $\widetilde{s}_n \sim \widetilde{b}_n$.
\end{theorem}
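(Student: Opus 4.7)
The plan is to extract an exact identity relating $\widetilde{s}_n$ to $\widetilde{b}_n$ from Corollary \ref{cor:BC-Split-gf}, and then use the super-exponential growth of $\widetilde{b}_n$ from Lemma \ref{lem:b-ratio-unlabeled} to control the error term.

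Specifically, Corollary \ref{cor:BC-Split-gf} gives $\widetilde{\BC}(x) = \frac{1}{1-x}\,\widetilde{\Split}(x)$. Reading off the coefficient of $x^n$ on both sides yields
\[ \widetilde{b}_n \;=\; \sum_{k=0}^{n} \widetilde{s}_k, \]
which is equivalent to the one-step telescoping identity $\widetilde{s}_n = \widetilde{b}_n - \widetilde{b}_{n-1}$ (valid for $n \ge 1$, and trivially for $n=0$). Dividing by $\widetilde{b}_n$ gives
\[ \frac{\widetilde{s}_n}{\widetilde{b}_n} \;=\; 1 - \frac{\widetilde{b}_{n-1}}{\widetilde{b}_n}. \]

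Now I would invoke Lemma \ref{lem:b-ratio-unlabeled}, which asserts $\widetilde{b}_n/\widetilde{b}_{n-1} \ge 2^{(n+1)/2}/n$ for large enough $n$. This lower bound tends to infinity, so its reciprocal $\widetilde{b}_{n-1}/\widetilde{b}_n$ tends to $0$. Combining with the identity above, $\widetilde{s}_n/\widetilde{b}_n \to 1$, which is precisely $\widetilde{s}_n \sim \widetilde{b}_n$.

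There is essentially no obstacle here, since both ingredients are already in hand: the exact coefficient identity comes directly from the species-level relation $\BC = \frac{1}{1-X} \cdot \Split$ (Theorem \ref{thm:bc-split}), and the ratio bound on $\widetilde{b}_n$ was the whole point of Lemma \ref{lem:b-ratio-unlabeled}. The only small care needed is to note that $\widetilde{b}_n > 0$ for $n \ge 0$ so that the division is legitimate, and to observe that $\widetilde{s}_n \ge 0$ so no cancellation issues arise. The proof can therefore be presented in just a few lines.
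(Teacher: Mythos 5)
Your proposal is correct and follows essentially the same route as the paper: both extract the identity $\widetilde{s}_n = \widetilde{b}_n - \widetilde{b}_{n-1}$ from Corollary \ref{cor:BC-Split-gf} and then apply Lemma \ref{lem:b-ratio-unlabeled} to conclude that $\widetilde{b}_{n-1}/\widetilde{b}_n \to 0$. No changes are needed.
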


\begin{proof}
By Corollary \ref{cor:BC-Split-gf}, $\widetilde{s}_n = \widetilde{b}_n - \widetilde{b}_{n-1}$. Then $\lim_{n\to\infty} \frac{\widetilde{s}_n}{\widetilde{b}_n} = 1 - \lim_{n\to\infty} \frac{\widetilde{b}_{n-1}}{\widetilde{b}_n}$, and Lemma \ref{lem:b-ratio-unlabeled} implies that $\lim_{n\to\infty}\frac{\widetilde{b}_{n-1}}{\widetilde{b}_n} = 0$.
\end{proof}

Theorem \ref{thm:snsimbn-unlabeled} has three immediate corollaries:

\begin{corollary}
The number of unlabeled split graphs on $n$ vertices asymptotically equals the number of labeled split graphs on $n$ vertices divided by $n!$. That is, $\widetilde{s}_n \sim s_n/n!$. \hfill $\square$
\end{corollary}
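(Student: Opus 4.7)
The plan is to chain together three asymptotic equivalences already available in the paper, all of which route through the bicolored enumeration. Specifically, I would combine Theorem \ref{thm:snsimbn-unlabeled} (which gives $\widetilde{s}_n \sim \widetilde{b}_n$), Theorem \ref{thm:unlabeled-bicolored-asymptotic} (which gives $\widetilde{b}_n \sim b_n/n!$), and the asymptotic $s_n \sim b_n$ from \eqref{eqn:BRW} (the result of Bender, Richmond, and Wormald).

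First I would write $\widetilde{s}_n \sim \widetilde{b}_n$ by citing Theorem \ref{thm:snsimbn-unlabeled}. Then I would apply Theorem \ref{thm:unlabeled-bicolored-asymptotic} to replace $\widetilde{b}_n$ by $b_n/n!$, obtaining $\widetilde{s}_n \sim b_n/n!$. Finally, dividing the asymptotic $b_n \sim s_n$ from \eqref{eqn:BRW} by $n!$ (which is harmless since dividing both sides of a $\sim$-relation by the same positive quantity preserves it) yields $b_n/n! \sim s_n/n!$, and transitivity of $\sim$ gives the desired conclusion $\widetilde{s}_n \sim s_n/n!$.

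There is no real obstacle here: the corollary is essentially a two-line consequence of results already proved or cited. The only thing to be slightly careful about is making the chain explicit, since the three inputs connect labeled and unlabeled versions of split graphs and bicolored graphs through different routes — one through the species identity $\BC = \frac{1}{1-X}\cdot \Split$ (Theorem \ref{thm:bc-split}) used to prove Theorem \ref{thm:snsimbn-unlabeled}, one through Wright's folklore result, and one through \eqref{eqn:BRW}. The proof can be presented in essentially one display of the form $\widetilde{s}_n \sim \widetilde{b}_n \sim b_n/n! \sim s_n/n!$, with a parenthetical citation attached to each equivalence.
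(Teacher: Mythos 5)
Your chain $\widetilde{s}_n \sim \widetilde{b}_n \sim b_n/n! \sim s_n/n!$ is exactly the intended derivation: the paper presents this as an immediate corollary of Theorem \ref{thm:snsimbn-unlabeled}, combined with Theorem \ref{thm:unlabeled-bicolored-asymptotic} and the Bender--Richmond--Wormald asymptotic $s_n \sim b_n$ from \eqref{eqn:BRW}. The proposal is correct and matches the paper's approach.
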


\begin{corollary}
$\widetilde{s}_n \sim \frac{c(n)}{\lfloor n/2 \rfloor!\, \lceil n/2 \rceil!} \,2^{n^2/4}$, where $c(n)$ is defined as in Section \ref{sec:almost-all-labeled}. \hfill $\square$
\end{corollary}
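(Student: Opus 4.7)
The proof is essentially a one-line composition of two already-established asymptotics, so the plan is simply to chain them together. Theorem \ref{thm:snsimbn-unlabeled} gives $\widetilde{s}_n \sim \widetilde{b}_n$, and the unnamed lemma immediately preceding Theorem \ref{thm:snsimbn-unlabeled} in Section \ref{sec:almost-all-unlabeled} gives $\widetilde{b}_n \sim \frac{c(n)}{\lfloor n/2 \rfloor!\, \lceil n/2 \rceil!}\, 2^{n^2/4}$. Since asymptotic equivalence is transitive, these two facts compose to give exactly the stated corollary.

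Concretely, I would write
\[ \frac{\widetilde{s}_n}{\frac{c(n)}{\lfloor n/2 \rfloor!\, \lceil n/2 \rceil!}\, 2^{n^2/4}} \;=\; \frac{\widetilde{s}_n}{\widetilde{b}_n} \cdot \frac{\widetilde{b}_n}{\frac{c(n)}{\lfloor n/2 \rfloor!\, \lceil n/2 \rceil!}\, 2^{n^2/4}}, \]
observe that the first factor tends to $1$ by Theorem \ref{thm:snsimbn-unlabeled} and the second factor tends to $1$ by the lemma, and conclude that the left-hand side tends to $1$, which is the definition of the claimed $\sim$ relation.

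There is no genuine obstacle at this step: all of the substantive content has already been absorbed into the two inputs. The nontrivial work lies upstream, namely (i) Theorem \ref{thm:snsimbn-unlabeled}, whose proof uses the species identity $\widetilde{s}_n = \widetilde{b}_n - \widetilde{b}_{n-1}$ from Corollary \ref{cor:BC-Split-gf} together with the rapid growth $\widetilde{b}_{n-1}/\widetilde{b}_n \to 0$ from Lemma \ref{lem:b-ratio-unlabeled}; and (ii) the Bender--Richmond--Wormald asymptotic \eqref{eqn:BRW}, transferred to the unlabeled setting via Theorem \ref{thm:unlabeled-bicolored-asymptotic}. Once both of those are in hand, the corollary is, as the author labels it, ``immediate,'' and a one-sentence proof citing Theorem \ref{thm:snsimbn-unlabeled} and the preceding lemma suffices.
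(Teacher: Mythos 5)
Your proof is correct and is exactly the argument the paper intends: the corollary follows by chaining Theorem \ref{thm:snsimbn-unlabeled} ($\widetilde{s}_n \sim \widetilde{b}_n$) with the preceding lemma's asymptotic for $\widetilde{b}_n$, using transitivity of $\sim$. Nothing further is needed.
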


\begin{corollary} \label{cor:s-ratio-unlabeled}
$\frac{\widetilde{s}_n}{\widetilde{s}_{n-1}} \ge 2^{(n+1)/2} / n$, for large enough $n$. \hfill $\square$
\end{corollary}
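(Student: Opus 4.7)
The plan is to reduce the claim to the analogous bound for bicolored graphs, Lemma~\ref{lem:b-ratio-unlabeled}. Extracting the coefficient of $x^n$ from Corollary~\ref{cor:BC-Split-gf} yields the exact identity $\widetilde{s}_n = \widetilde{b}_n - \widetilde{b}_{n-1}$, so the target ratio is expressible purely in terms of consecutive $\widetilde{b}_n$'s and the problem collapses onto Lemma~\ref{lem:b-ratio-unlabeled} once the right algebraic massaging is done.

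Setting $r_n := \widetilde{b}_n/\widetilde{b}_{n-1}$, the identity rewrites the target ratio as
\[
\frac{\widetilde{s}_n}{\widetilde{s}_{n-1}}
\;=\; \frac{\widetilde{b}_n - \widetilde{b}_{n-1}}{\widetilde{b}_{n-1} - \widetilde{b}_{n-2}}
\;=\; \frac{r_{n-1}\,(r_n-1)}{r_{n-1}-1}.
\]
A one-line algebraic check (cross-multiply and cancel the common $r_{n-1}r_n$ term, using that $r_{n-1}>1$ for large $n$ by Lemma~\ref{lem:b-ratio-unlabeled}) shows that $\frac{r_{n-1}(r_n-1)}{r_{n-1}-1} \ge r_n$ if and only if $r_n \ge r_{n-1}$. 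Consequently, once I know the sequence $r_n$ is eventually (weakly) increasing, Lemma~\ref{lem:b-ratio-unlabeled} immediately delivers $\widetilde{s}_n/\widetilde{s}_{n-1} \ge r_n \ge 2^{(n+1)/2}/n$.

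To establish eventual monotonicity of $r_n$, I would substitute the explicit asymptotic $\widetilde{b}_n \sim c(n)/\bigl(\lfloor n/2\rfloor!\,\lceil n/2\rceil!\bigr) \cdot 2^{n^2/4}$ from the lemma stated earlier in this subsection; splitting on the parity of $n$ collapses the factorial ratio to $2/n$ (for $n$ even) or $2/(n+1)$ (for $n$ odd), which together with $c(n)/c(n-1)\to 1$ and the exponential factor $2^{(2n-1)/4}$ yields $r_n \sim 2^{n/2+3/4}/n$, and therefore $r_{n+1}/r_n \to \sqrt{2} > 1$. Thus $r_n$ is strictly increasing from some point onward. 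This parity/Stirling-style bookkeeping is the main (and only mildly tedious) obstacle, but it is essentially the same calculation that derives Lemma~\ref{lem:b-ratio-unlabeled} from Lemma~\ref{lem:b-ratio}, so no conceptually new step is required.
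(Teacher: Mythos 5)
Your proof is correct, but it takes a different route from the paper's. The paper obtains this corollary immediately from Theorem~\ref{thm:snsimbn-unlabeled}: since $\widetilde{s}_n \sim \widetilde{b}_n$, the ratio $\widetilde{s}_n/\widetilde{s}_{n-1}$ is $(1+o(1))\,\widetilde{b}_n/\widetilde{b}_{n-1}$, and the stated bound transfers from Lemma~\ref{lem:b-ratio-unlabeled} because that bound has slack (the true ratio grows like $2^{n/2+3/4}/n$, a factor $2^{1/4}$ above the claimed $2^{(n+1)/2}/n$), so the $1+o(1)$ is harmless. You instead bypass Theorem~\ref{thm:snsimbn-unlabeled} and work with the exact identity $\widetilde{s}_n = \widetilde{b}_n - \widetilde{b}_{n-1}$, reducing the claim to eventual monotonicity of $r_n = \widetilde{b}_n/\widetilde{b}_{n-1}$; your algebra $\widetilde{s}_n/\widetilde{s}_{n-1} = r_{n-1}(r_n-1)/(r_{n-1}-1) \ge r_n \iff r_n \ge r_{n-1}$ checks out, and it buys you the \emph{exact} inequality $\widetilde{s}_n/\widetilde{s}_{n-1} \ge \widetilde{b}_n/\widetilde{b}_{n-1}$ with no asymptotic slack needed. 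The cost is that you must establish monotonicity of $r_n$, which sends you back to the asymptotic formula for $\widetilde{b}_n$ anyway. One harmless imprecision there: $r_{n+1}/r_n$ does not converge to exactly $\sqrt{2}$, since the factor $\bigl(c(n-1)/c(n)\bigr)^2$ oscillates between two values depending on parity; but both subsequential limits are within $10^{-5}$ of $\sqrt{2}$, so the conclusion that $r_n$ is eventually increasing stands. Both arguments are valid; yours is somewhat more labor-intensive but yields a marginally stronger intermediate fact.
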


We now finally reach the proof of the conjecture of Cheng, Collins, and Trenk \cite{CCT}:

\begin{theorem} \label{thm:almost-all-balanced-unlabeled}
Almost all unlabeled split graphs are balanced: $\lim_{n\to\infty} \frac{\widetilde{u}_n}{\widetilde{s}_n} = 0$.
\end{theorem}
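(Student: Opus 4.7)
The plan is to mirror the proof of Theorem \ref{thm:almost-all-balanced}, substituting unlabeled quantities for labeled ones and using Corollary \ref{cor:s-ratio-unlabeled} in place of Lemma \ref{lem:b-ratio}. The key starting identity is already available: equation \eqref{eqn:identity}, $\widetilde{\UU}(x) = \frac{x}{1-x}\widetilde{\Split}(x)$, which one can also re-derive by passing Theorem \ref{thm:main} through the unlabeled functor using $\widetilde{E}(x) = \frac{1}{1-x}$ (a short check confirms $\frac{(2-x)\widetilde{E}(x)-2}{(1-x)\widetilde{E}(x)}$ collapses to $\frac{x}{1-x}$). Extracting coefficients yields the clean recurrence
\[ \widetilde{u}_n \;=\; \sum_{k=0}^{n-1} \widetilde{s}_k. \]

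Next I would show the sum is dominated by its last term. By Corollary \ref{cor:s-ratio-unlabeled}, for $n$ large enough the ratio $\widetilde{s}_k/\widetilde{s}_{k-1} \ge 2^{(k+1)/2}/k$ exceeds $2$, so $(\widetilde{s}_k)$ is eventually strictly increasing; say this holds for $k \ge N$. Split the sum as $\sum_{k < N} \widetilde{s}_k + \sum_{k=N}^{n-1}\widetilde{s}_k$; the first piece is a constant, and the second is bounded by $n\,\widetilde{s}_{n-1}$. Thus $\widetilde{u}_n \le n\,\widetilde{s}_{n-1} + O(1)$ for large $n$, which absorbs into $\widetilde{u}_n \le 2n\,\widetilde{s}_{n-1}$ once $n$ is large enough.

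Finally, divide by $\widetilde{s}_n$ and apply Corollary \ref{cor:s-ratio-unlabeled} once more to get
\[ \frac{\widetilde{u}_n}{\widetilde{s}_n} \;\le\; \frac{2n\,\widetilde{s}_{n-1}}{\widetilde{s}_n} \;\le\; \frac{2n^2}{2^{(n+1)/2}}, \]
which tends to $0$ as $n \to \infty$, proving the theorem.

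I do not expect a serious obstacle: all the asymptotic machinery is in place from Section \ref{sec:almost-all-labeled} and the earlier part of Section \ref{sec:almost-all-unlabeled}, and the unlabeled identity \eqref{eqn:identity} is cleaner than its labeled counterpart from Theorem \ref{thm:labeled-U-S} (there is no analog of the coefficient sequence $a_i$ to bound). The only minor care needed is verifying the eventual monotonicity of $(\widetilde{s}_k)$ so that the final term dominates, but this is immediate from Corollary \ref{cor:s-ratio-unlabeled}.
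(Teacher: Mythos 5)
Your proposal is correct and follows essentially the same route as the paper: extract $\widetilde{u}_n = \sum_{k=0}^{n-1}\widetilde{s}_k$ from \eqref{eqn:identity}, bound the sum by (a constant multiple of) $n\,\widetilde{s}_{n-1}$ using Corollary \ref{cor:s-ratio-unlabeled}, and divide by $\widetilde{s}_n$ to get a bound of order $n^2/2^{(n+1)/2} \to 0$. Your extra care in splitting off the first $N$ terms to justify that the last term dominates is a minor (and harmless) refinement of the paper's one-line assertion of the same fact.
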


\begin{proof}
By \eqref{eqn:identity}, $\widetilde{u}_n = \sum_{k=0}^{n-1} \widetilde{s}_k$. By Corollary \ref{cor:s-ratio-unlabeled}, for large enough $n$, the highest term in this sum is the last one, $\widetilde{s}_{n-1}$. Thus $\widetilde{u}_n \le n\,\widetilde{s}_{n-1}$, for large enough $n$, and so
\[ \frac{\widetilde{u}_n}{\widetilde{s}_n} \le \frac{n \widetilde{s}_{n-1}}{\widetilde{s}_n} \le \frac{n^2}{2^{(n+1)/2}} \]
(this last inequality uses Corollary \ref{cor:s-ratio-unlabeled}), and $\frac{n^2}{2^{(n+1)/2}}$ goes to $0$ as $n \to \infty$.
\end{proof}

\section*{Acknowledgements}

Thanks to Andrew Gainer-Dewar, for introducing me to combinatorial species and graph enumeration in 2013. Also thanks to Karen Collins: I found out about split graphs from her talk at Queens College in October 2017. Finally, thanks to Catherine Greenhill (editor for the Electronic Journal of Combinatorics) and Tomasz \L{}uczak, for tracking down the reference for Theorem \ref{thm:unlabeled-bicolored-asymptotic}.

\vspace{1pc}\renewcommand{\section}[2]{\textbf{References.}}
\footnotesize{

\end{document}